\begingroup\color{blue}
\newverbcommand{\cverb}{\small\color{purple}}{}
\newverbcommand{\everb}{\small\color{blue}}{}
\newcolumntype{L}[1]{>{\hsize=#1\hsize\raggedright\arraybackslash}X}
\newcolumntype{R}[1]{>{\hsize=#1\hsize\raggedleft\arraybackslash}X}
\newcolumntype{C}[1]{>{\hsize=#1\hsize\centering\arraybackslash}X}
\newcommand{\atext}[1]{\text{\quad #1\quad}}
\newcommand{\bbn}{\mathbb{N}}
\newcommand{\bbz}{\mathbb{Z}}
\newcommand{\lcm}{\operatorname{lcm}}
\newcommand{\ord}{\operatorname{ord}}
\newtheorem{thm}{Theorem}[section]
\newtheorem{prop}[thm]{Proposition}
\newtheorem{cor}[thm]{Corollary}
\newtheorem{question}[thm]{Question}
\newtheorem{hypothesis}[thm]{Hypothesis}
\newtheorem{lem}[thm]{Lemma}
\newtheorem*{prop*}{Proposition}
\newtheorem*{cor*}{Corollary}
\newtheorem*{lem*}{Lemma}
\newtheorem*{thm*}{Theorem}
\theoremstyle{definition}
\newtheorem{conj}[thm]{Conjecture}
\newtheorem{example}[thm]{Example}
\newtheorem*{conj*}{Conjecture}
\newcounter{homework}
\newcounter{exercise}[homework]
\theoremstyle{remark}
\newtheorem*{defn*}{Definition}
\newtheorem*{remark*}{Remark}
\author[A. S. Chen]{Annie S. Chen}
\address{Boulder High School, 1604 Arapahoe Ave, Boulder, CO 80302} 
\email{annieboulder@gmail.com}
\author[T. A. Gassert]{T. Alden Gassert}
\address{Western New England University, 1215 Wilbraham Road, Springfield, MA 01119}
\email{thomas.alden.gassert@gmail.com}
\author[K. E. Stange]{Katherine E. Stange} 
\address{Department of Mathematics, University of Colorado,
Campus Box 395, Boulder, Colorado 80309-0395}
\email{kstange@math.colorado.edu}
\title[Index divisibility in dynamical sequences and orbits mod $p$]{Index divisibility in dynamical sequences and cyclic orbits modulo $p$}
\date{\today} 
\keywords{arithmetic dynamics, dynamical portrait, index divisibility, cycle, orbit, functional digraph, dynamical sequence, polynomial map, iteration, quadratic map, divisibility sequence, integer sequence, post-critical orbit}
\subjclass[2010]{Primary: 37P05, 37P25, 11Y55, Secondary: 11B37, 11B39, 11B50, 11G99}
\thanks{
The third author's work was supported by the National Security Agency grant H98230-16-1-0040 and National Science Foundation grant DMS-1643552. 
}
\begin{document}
\maketitle


\begin{abstract}
Let $\phi(x) = x^d + c$ be an integral polynomial of degree at least 2, and consider the sequence $(\phi^n(0))_{n=0}^\infty$, which is the orbit of $0$ under iteration by $\phi$. Let $D_{d,c}$ denote the set of positive integers $n$ for which $n \mid \phi^n(0)$. We give a characterization of $D_{d,c}$ in terms of a directed graph and describe a number of its properties, including its cardinality and the primes contained therein.  In particular, we study the question of which primes $p$ have the property that the orbit of $0$ is a single $p$-cycle modulo $p$.  We show that the set of such primes is finite when $d$ is even, and conjecture that it is infinite when $d$ is odd.
\end{abstract}

\section{Introduction}

A dynamical sequence is the orbit $\alpha, \phi(\alpha), \phi^2(\alpha), \ldots$ of some $\alpha$ in a ring $R$ under iteration of a map $\phi: R \rightarrow R$.  In arithmetic dynamics, one takes $\phi$ to be a rational map defined over a number field and $\alpha$ to be an algebraic number.  Such dynamical sequences have many properties in common with their more well-known cousins: recurrence sequences and algebraic divisibility sequences arising from algebraic groups, such as Lucas sequences and elliptic divisibility sequences.  In particular, all such sequences $a_n$ are \emph{divisibility sequences}, i.e. whenever $n \mid m$, then $a_n \mid a_m$. 

The study of the primes appearing in such sequences has a centuries-long history dating back at least to Fermat's study of primes of the form $2^{2^n} +1$.  The primes appearing in a dynamical sequence encode information about the dynamical system in residue fields.  For example, taking $R = \mathbb{Z}$, if $p \mid \phi^n(0)$, then $0$ has period dividing $n$ in the dynamical system $\phi: \mathbb{Z}/p\mathbb{Z} \rightarrow \mathbb{Z}/p\mathbb{Z}$.  In particular, $p \mid \phi^p(0)$ if and only if the dynamical system given by $\phi$ on $\mathbb{Z}/p\mathbb{Z}$ consists of a single orbit of size $1$ or $p$.  Silverman studied the statistics of orbit sizes for rational maps modulo a varying prime $p$ \cite{SilvermanNewYorkJournal} (see also \cite{Chang}).  

In this paper, we restrict ourselves to the study of the maps $\phi(x) = x^d + c \in \mathbb{Z}[x]$, where $d \ge 2$. The orbit structure for $x^2+c$ is of particular interest for primality testing, integer factorization and pseudo-random number generation \cite{Blum,Lucas,Pollard}.  Silverman collected some numerical data on quadratic maps $x^2 +c$ \cite{SilvermanNewYorkJournal}, while  Peinado, Montoya, Mu\~noz and Yuste give explicit upper bounds for the cycle sizes of $x^2+c$ in a finite field \cite{Peinado}; more explicit structure is known for the exceptional maps $x^2$ and $x^2-2$ \cite{Vasiga}.   Jones \cite{j08} found that the natural density of primes dividing at least one nonzero term of a dynamical sequence is zero for four infinite families of quadratic functions, including $\phi (x) = x^2 + c$, where $c \in \mathbb{Z}$ and $c \neq 1$. Hamblen, Jones, and Madhu \cite{h15} later generalized the results to $\phi(x) = x^d+c$ (see also \cite{Benedetto}).  In other words, the primes $p$ for which $0$ is periodic (instead of pre-periodic) are of density zero.  These results imply that the primes $p$ for which the dynamical system consists of a single $p$-cycle modulo $p$ are of density zero. 

Let $S_{d,c}$ be the set of primes $p$ such that the dynamical system $\phi: \mathbb{Z}/p\mathbb{Z} \rightarrow \mathbb{Z}/p\mathbb{Z}$ consists of a single $p$-cycle. We show the following.

\begin{thm}
\label{thm:s}
Let $\phi(x) = x^d + c$, where $c,d \in \mathbb{Z}$ and $d \ge 2$.  Then whenever $d$ is even and $c$ is odd, $S_{d,c} = \{ 2 \}$; while if $d$ is even and $c$ is even, then $S_{d,c} = \emptyset$.
\end{thm}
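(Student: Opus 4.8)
The plan is to split the problem according to the prime $p$, handling $p = 2$ by a one-line direct computation and ruling out all odd primes via a permutation-polynomial obstruction that depends only on the parity of $d$.

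First I would dispose of $p = 2$. On $\mathbb{Z}/2\mathbb{Z} = \{0,1\}$ the map reads $\phi(0) \equiv c$ and $\phi(1) \equiv 1 + c \pmod 2$ (using $1^d = 1$). A single $2$-cycle is exactly the transposition swapping $0$ and $1$, i.e. $\phi(0) = 1$ and $\phi(1) = 0$; this holds precisely when $c$ is odd, and fails (both points become fixed) when $c$ is even. Hence $2 \in S_{d,c}$ if and only if $c$ is odd, independent of $d$. This already accounts for the presence or absence of $2$ in both cases of the theorem.

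The heart of the argument is to show that no odd prime can lie in $S_{d,c}$ when $d$ is even. The key observation is that if the orbit of $0$ is a single $p$-cycle, then $\phi$ is in particular a bijection of $\mathbb{Z}/p\mathbb{Z}$: a single $p$-cycle is a cyclic permutation of all $p$ residues, so every residue has a unique preimage. Since translation $x \mapsto x + c$ is a bijection, $\phi$, being the composition of the power map $x \mapsto x^d$ with translation by $c$, is a permutation of $\mathbb{Z}/p\mathbb{Z}$ if and only if $x \mapsto x^d$ is. I would then invoke the classical characterization: on the cyclic group $(\mathbb{Z}/p\mathbb{Z})^\times$ of order $p-1$ the $d$-th power map corresponds to multiplication by $d$, which is invertible exactly when $\gcd(d, p-1) = 1$ (and $0 \mapsto 0$ in either case). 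Therefore $x \mapsto x^d$ permutes $\mathbb{Z}/p\mathbb{Z}$ if and only if $\gcd(d, p-1) = 1$.

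To finish, I would combine these facts. If $d$ is even and $p$ is an odd prime, then $p-1$ is even, so $\gcd(d, p-1) \ge 2 > 1$; thus $x \mapsto x^d$, and hence $\phi$, is not a permutation of $\mathbb{Z}/p\mathbb{Z}$, and $0$ cannot generate a single $p$-cycle. This rules out every odd prime, so $S_{d,c} \subseteq \{2\}$ whenever $d$ is even. Intersecting with the $p = 2$ computation yields $S_{d,c} = \{2\}$ when $c$ is odd and $S_{d,c} = \emptyset$ when $c$ is even, as claimed. There is no serious obstacle here; the only points requiring care are the justification that a single $p$-cycle forces $\phi$ to be a bijection (so that the permutation-polynomial criterion applies) and the routine verification of the $\gcd$ criterion for the power map, with everything else reducing to parity bookkeeping. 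The asymmetry with the odd-$d$ case, where $\gcd(d, p-1) = 1$ can hold for infinitely many $p$, is precisely what makes that case genuinely harder and only conjectural.
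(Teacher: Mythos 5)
Your proposal is correct, but it reaches the conclusion by a different route than the paper. The paper deduces Theorem \ref{thm:s} as an immediate corollary of Theorem \ref{eventhm}, which says that for even $d$ the set $P_{d,c}$ of primes $p$ with $p \mid W_p$ is exactly $\{2\} \cup \{p : p \mid c\}$; the key step there is a local injectivity argument at $0$: if the orbit of $0$ were a single $p$-cycle, $0$ would have a unique preimage, but for even $d$ the $d$-th roots of $-c$ come in pairs $\pm y$, forcing $y \equiv -y$, hence $y \equiv 0$ and $p \mid c$, which in turn forces period $1$ and a contradiction. The remaining checks at $p=2$ and at $p \mid c$ are then one-line period computations, exactly as in your first paragraph. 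You instead bypass $P_{d,c}$ entirely and use the global obstruction: a single $p$-cycle forces $\phi$, and hence $x \mapsto x^d$, to be a permutation of $\mathbb{Z}/p\mathbb{Z}$, which requires $\gcd(d,p-1)=1$ and is impossible for even $d$ and odd $p$. This is a clean, self-contained argument for the theorem as stated (and the permutation criterion you invoke is the same one the paper uses later, in the proof of Theorem \ref{th:exceptional primes}, for a different purpose). What the paper's route buys in exchange for being less direct is the stronger statement of Theorem \ref{eventhm}: it pins down all primes with $p \mid W_p$, not merely those giving a single $p$-cycle, and that characterization is needed elsewhere (e.g.\ in the finiteness results of Section 3). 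Your two points ``requiring care'' --- that a single $p$-cycle is a cyclic permutation and that the power map permutes $\mathbb{Z}/p\mathbb{Z}$ iff $\gcd(d,p-1)=1$ --- are both standard and correctly handled.
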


Based on numerical data and heuristics, we conjecture that there are infinitely many such primes otherwise.

\begin{conj}
\label{conj:s}
$S_{d,c}$ is infinite whenever $d$ is odd.
\end{conj}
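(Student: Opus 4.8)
The plan is to combine the algebraic characterization of $S_{d,c}$ with a probabilistic (Borel--Cantelli type) heuristic, and then to isolate precisely the analytic input needed to turn the heuristic into a theorem. First I would record the reformulation underlying Theorem~\ref{thm:s}: a prime $p$ lies in $S_{d,c}$ exactly when (i) $\phi$ is a permutation of $\mathbb{Z}/p\mathbb{Z}$, and since $x \mapsto x^d$ is a bijection iff $\gcd(d,p-1)=1$ while $x \mapsto x+c$ always is, this happens precisely when $\gcd(d,p-1)=1$; and (ii) the orbit of $0$ has full length $p$, equivalently $p \mid \phi^p(0)$ and $p \nmid c$, the latter excluding the fixed point $\phi(0)=c$. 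Thus the prime elements of $D_{d,c}$ are exactly the prime divisors of $c$ together with $S_{d,c}$, and the conjecture is equivalent to the assertion that $D_{d,c}$ contains infinitely many primes. When $d$ is even, $2 \mid \gcd(d,p-1)$ for every odd $p$, forcing $S_{d,c} \subseteq \{2\}$; when $d$ is odd, the primes with $\gcd(d,p-1)=1$ have positive density $\delta_d = \prod_{q \mid d}\bigl(1 - \tfrac{1}{q-1}\bigr) > 0$ by inclusion--exclusion over the prime divisors $q$ of $d$, so (i) holds infinitely often and the entire difficulty lies in (ii).

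The heuristic is as follows. For primes $p$ with $\gcd(d,p-1)=1$ the map $\phi \bmod p$ is a permutation polynomial of $\mathbb{Z}/p\mathbb{Z}$, and, following the philosophy of Silverman's statistical study of orbit lengths \cite{SilvermanNewYorkJournal}, one models its cycle structure on that of a uniformly random permutation of a $p$-element set. For such a permutation the element $0$ lies in a cycle of length exactly $p$ (equivalently, the permutation is a single $p$-cycle) with probability $(p-1)!/p! = 1/p$. Granting this model, the expected number of primes $p \le X$ in $S_{d,c}$ is
\begin{equation*}
\sum_{\substack{p \le X \\ \gcd(d,p-1)=1}} \frac{1}{p} \;\sim\; \delta_d \log\log X \longrightarrow \infty ,
\end{equation*}
so one expects infinitely many single-cycle primes, in agreement with the numerical data. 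The count grows only like $\log\log X$, which is consistent with the density-zero conclusions of Jones \cite{j08} and Hamblen--Jones--Madhu \cite{h15}: the primes of $S_{d,c}$ should be extremely sparse even when infinite.

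To make this rigorous I would try to establish an equidistribution statement, namely that, averaged over primes $p \le X$ with $\gcd(d,p-1)=1$, the probability that $0$ has exact period $p$ is $(1+o(1))/p$ with an error summable against $\sum 1/p$. The natural tool is Chebotarev's density theorem applied to the fields generated by the iterated preimages of $0$, detecting periodicity through the splitting of $p$. The main obstacle is structural rather than technical: the target period in condition (ii) is $p$ \emph{itself}, which grows with the prime, so there is no single Galois condition whose Chebotarev density one can invoke, and the governing probability $1/p$ yields only a \emph{barely divergent} sum of size $\log\log X$. This places the problem beyond the reach of sieve methods or of unconditional Chebotarev arguments, and squarely in the regime of Artin's primitive root conjecture (known only under GRH) and the infinitude of Fermat or Mersenne primes. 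I therefore expect that a complete proof would require either a strong uniform-in-$p$ input on the Galois action on dynatomic points, sharp enough to control a $\log\log X$-size count, or a conditional argument under GRH modeled on Hooley's treatment of Artin's conjecture; unconditionally the statement appears to be genuinely open, which is why it is recorded here as a conjecture.
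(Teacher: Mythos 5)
This statement is a conjecture, and neither the paper nor you proves it: you correctly identify it as open and supply essentially the same heuristic the paper itself gives in Section 5, namely a random-permutation model for $\phi \bmod p$ in which the single-$p$-cycle probability is about $1/p$ (the paper's Hypothesis 5.1 posits $2/(p-1)$ and restricts to $p \equiv 11 \pmod{12}$ for $d=3$ to account for the parity obstruction of Theorem 1.3 --- a refinement your cruder $1/p$ model omits, though it does not affect the conclusion), leading to the same barely divergent $\log\log X$ expectation. Your reduction of membership in $S_{d,c}$ to $\gcd(d,p-1)=1$ together with $p \mid \phi^p(0)$ and $p \nmid c$ is consistent with the paper's Theorems 1.1 and 1.5, and your closing assessment that an unconditional proof is out of reach matches the authors' decision to record the statement only as a conjecture.
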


Using an analysis of the cycle structure of the permutation $x \mapsto x^d$ on $\mathbb{Z}/p\mathbb{Z}$, we are able to somewhat restrict the set $S_{d,c}$ as follows.

\begin{thm}\label{th:exceptional primes}
If $d \equiv 3 \pmod 4$, and $p \equiv 1 \pmod 4$ is prime, then $p \notin S_{d,c}$.
\end{thm}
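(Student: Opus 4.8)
The plan is to compute the sign (parity) of $\phi$ as a permutation of $\mathbb{Z}/p\mathbb{Z}$ and show it is odd, whereas a single $p$-cycle, acting on the odd number $p$ of points, is necessarily an \emph{even} permutation (its sign is $(-1)^{p-1} = +1$). First I would dispose of the case $\gcd(d, p-1) > 1$: here $x \mapsto x^d$ fails to be injective on $(\mathbb{Z}/p\mathbb{Z})^\ast$, so $\phi$ is not even a bijection and $p \notin S_{d,c}$ trivially. So assume $\gcd(d,p-1)=1$, so that $\phi$ is a permutation. Write $\phi = T_c \circ \psi$, where $\psi(x) = x^d$ and $T_c(x) = x+c$ is translation. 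Translation by $c$ is either the identity (if $p \mid c$) or a single $p$-cycle, hence even in both cases, so $\operatorname{sgn}(\phi) = \operatorname{sgn}(\psi)$ and everything reduces to the sign of the $d$-power map.

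Next I would analyze $\psi$ through the cyclic structure of $(\mathbb{Z}/p\mathbb{Z})^\ast \cong C_{p-1}$. The map $\psi$ fixes $0$ (a $1$-cycle, irrelevant to the sign) and, under the discrete logarithm isomorphism $C_{p-1} \cong \mathbb{Z}/(p-1)\mathbb{Z}$, becomes the multiplication-by-$d$ map $\sigma_d \colon k \mapsto dk$ on $\mathbb{Z}/(p-1)\mathbb{Z}$. Thus $\operatorname{sgn}(\psi) = \operatorname{sgn}(\sigma_d)$. Writing $m = p-1 = 2^a u$ with $u$ odd and, using $p \equiv 1 \pmod 4$, $a \ge 2$, I would apply the Chinese Remainder Theorem to split $\sigma_d$ as a product permutation on $\mathbb{Z}/2^a\mathbb{Z} \times \mathbb{Z}/u\mathbb{Z}$. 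The multiplicativity of the sign under products, $\operatorname{sgn}(\alpha \times \beta) = \operatorname{sgn}(\alpha)^{|B|}\operatorname{sgn}(\beta)^{|A|}$ (for $\alpha$ on $A$ and $\beta$ on $B$), together with the fact that $2^a$ is even and $u$ is odd, collapses the computation to $\operatorname{sgn}(\sigma_d)$ on the single factor $\mathbb{Z}/2^a\mathbb{Z}$.

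The heart of the argument is counting cycles of $\sigma_d$ on $\mathbb{Z}/2^a\mathbb{Z}$. Using $\operatorname{sgn} = (-1)^{(\text{points}) - (\text{cycles})}$ and that $2^a$ is even, the sign equals $(-1)^{c_a}$, where $c_a$ is the number of cycles. Grouping residues by their gcd with $2^a$ gives $c_a = \sum_{j=0}^{a} \phi(2^j)/\operatorname{ord}_{2^j}(d)$. For $a = 2$ one computes $c_2 = 3$ (here the hypothesis $d \equiv 3 \pmod 4$ forces $\operatorname{ord}_4(d) = 2$), which is odd. For the inductive step $a \ge 3$, the increment $c_a - c_{a-1} = 2^{a-1}/\operatorname{ord}_{2^a}(d)$ is \emph{even}: since the exponent of $(\mathbb{Z}/2^a\mathbb{Z})^\ast$ is $2^{a-2}$, one has $\operatorname{ord}_{2^a}(d) \mid 2^{a-2}$, so the quotient is divisible by $2^{a-1-(a-2)} = 2$. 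Hence $c_a \equiv c_2 \equiv 1 \pmod 2$ for all $a \ge 2$, giving $\operatorname{sgn}(\sigma_d) = -1$ and therefore $\operatorname{sgn}(\phi) = -1$, contradicting the evenness of a $p$-cycle.

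The main obstacle is the cycle count on $\mathbb{Z}/2^a\mathbb{Z}$, specifically controlling $\operatorname{ord}_{2^a}(d)$ well enough to pin down the parity of each increment; the clean resolution is to avoid computing the order exactly and instead use only the divisibility $\operatorname{ord}_{2^a}(d) \mid 2^{a-2}$. I would also verify that both hypotheses are used sharply: $p \equiv 1 \pmod 4$ guarantees $a \ge 2$, so that the $2$-part is nontrivial, and $d \equiv 3 \pmod 4$ is exactly what makes the base case $c_2$ odd (for $d \equiv 1 \pmod 4$ one gets $\operatorname{ord}_4(d)=1$, $c_2 = 4$, and the sign is $+1$, consistent with there being no obstruction in that regime).
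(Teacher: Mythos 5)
Your proposal is correct, and at the top level it follows the same strategy as the paper: reduce to the parity of the permutation $x \mapsto x^d$ on $\mathbb{Z}/p\mathbb{Z}$ (after noting that translation is even and that a single $p$-cycle on $p$ points is even), then show that parity is odd exactly when $d \equiv 3 \pmod 4$. Where you genuinely diverge is in how you compute that parity. The paper works directly with the cycle structure of the power map on $(\mathbb{Z}/p\mathbb{Z})^*$: it sums $N_k = \varphi(k)/\ord_k d$ over \emph{all} divisors $k$ of $p-1$, and must show by hand that every block of terms $k = 2^i\delta$ with odd $\delta > 1$ contributes an even amount, which requires an explicit determination of $\ord_{2^i} d$ (its Lemma on $2$-power orders, with the parameters $\mu = v_2(d-1)$ and $\nu = v_2(d^2-1)-1$) together with $\ord_{2^i\delta} d = \lcm(\ord_{2^i} d, \ord_\delta d)$. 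You instead pass to multiplication by $d$ on $\mathbb{Z}/(p-1)\mathbb{Z}$ via the discrete logarithm and invoke the sign formula for a product permutation, $\operatorname{sgn}(\alpha \times \beta) = \operatorname{sgn}(\alpha)^{|B|}\operatorname{sgn}(\beta)^{|A|}$, which kills the entire odd part of $p-1$ in one line because $|A| = 2^a$ is even and $|B| = u$ is odd. Your treatment of the remaining $2$-part is also lighter: rather than computing $\ord_{2^a} d$ exactly, you only use that it divides the exponent $2^{a-2}$ of $(\mathbb{Z}/2^a\mathbb{Z})^*$ for $a \ge 3$, which makes each increment $c_a - c_{a-1} = 2^{a-1}/\ord_{2^a} d$ even and reduces everything to the base case $c_2 = 3$. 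The trade-off is that the paper's explicit order computation (its Lemma \ref{lem:2 group}) yields slightly more information about the full cycle type, whereas your argument isolates exactly the divisibility needed for the parity and is correspondingly shorter; both correctly locate the entire obstruction in the condition $v_2(d-1) = 1$, i.e. $d \equiv 3 \pmod 4$.
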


For example, when $d$ is an odd power of $3$, we conclude that $S_{d,c}$ contains only primes congruent to $11 \pmod{12}$ (Corollary \ref{cor:11mod12}).

Theorem \ref{thm:s} is a consequence of our study of \emph{index divisibility} in dynamical sequences.  The question of index divisibility for a sequence $(a_n)_0^\infty$ seeks to characterize those integers $n \ge 1$ such that $n \mid a_n$.  It has a substantial history for Fibonacci and Lucas sequences \cite{AndreJeannin,HoggattBergum,Jarden,Smyth,Somer1,Somer2,Walsh}, and has also been studied for elliptic divisibility sequences \cite{Gotts,SilvermanStange} and general linear recurrences \cite{Pomerance}.  As another example, composite integers $n$ for which $n \mid a^n-a$ are called  pseudoprimes to the base $a$. 

Throughout, let $\phi(x) = x^d + c \in \mathbb{Z}[x]$ where $d \ge 2$, let $(W_n)$ denote the orbit of 0 under $\phi$, i.e. $W_n = \phi^n(0)$, and define
\begin{align*}
D_{d,c} := \{n \in \mathbb{Z} : n \geq 1, n  \mid  W_n \}, \atext{and} P_{d,c} := \{p \in D_{d,c}: p \text{ is prime}\}. 
\end{align*}
In the spirit of Smyth and of Silverman and Stange \cite{SilvermanStange, Smyth}, we represent $D_{d,c}$ by a directed graph that connects each element to its minimal multiples. To construct this \emph{index divisibility graph} $G$, initially let $1$ be in the vertex set $G_V$, then add vertices and edges to $G$ iteratively according to the the following rules.  

Let $v_p(x)$ denote the $p$-adic valuation of an integer $x$.  For each $n \in G_V$, adjoin the vertex $pn$ and the directed edge $(n,np)$ if
\begin{enumerate}
\item $p$ is a prime satisfying $v_p(\phi^n(0)) > v_p(n)$ (edge of \emph{type $1$}), or 
\item $p \in P_{d,c}$ satisfies $v_p(n) = 0$ (edge of \emph{type $2$}).
\end{enumerate}
Our main results provide a characterization of $D_{d,c}$ and $P_{d,c}$ in terms of this graph.

\begin{thm} \label{maintheorem}
Let $\phi(x) = x^d + c$, where $c,d \in \mathbb{Z}$ and $d \geq 2$. Let $G$ be the index divisibility graph corresponding to $\phi$, and let $G_V$ be the vertex set of $G$. Then $G_V = D_{d,c}$. 
\end{thm}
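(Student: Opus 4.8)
The plan is to prove the two inclusions $G_V\subseteq D_{d,c}$ and $D_{d,c}\subseteq G_V$ separately, after first recording the precise arithmetic of $(W_n)$ that makes the two edge rules transparent. Throughout I would use the divisibility-sequence property noted in the introduction, namely $W_m\mid W_n$ whenever $m\mid n$.

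First I would pin down the $p$-adic valuations of the terms, which is the step I expect to be the crux and where the fact that $0$ is the \emph{critical} point of $\phi$ is essential. Since $\phi(x)-\phi(0)=x^d$, the standard divisibility $u-v\mid\phi^{r-1}(u)-\phi^{r-1}(v)$ applied at $u=\phi(x)$, $v=\phi(0)$ gives $\phi^r(x)=W_r+x^dh_r(x)$ for some $h_r\in\mathbb{Z}[x]$. Fix a prime $q$ dividing some term and let $r_q$ be the \emph{rank of apparition}, the least $k\ge1$ with $q\mid W_k$; this is exactly the period of $0$ under $\phi$ on $\mathbb{Z}/q\mathbb{Z}$, so $r_q\le q$, and $q\mid W_n$ if and only if $r_q\mid n$. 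Writing $s_q:=v_q(W_{r_q})\ge1$ and $\psi=\phi^{r_q}$, the identity $\psi(y)=W_{r_q}+y^dh_{r_q}(y)$ shows that if $v_q(y)=s_q$ then the second summand has $q$-valuation $\ge d\,s_q\ge 2s_q>s_q$, so $v_q(\psi(y))=s_q$; since $\psi(0)=W_{r_q}$, induction gives $v_q(W_{kr_q})=s_q$ for every $k\ge1$, that is
$$v_q(W_n)=\begin{cases}s_q & r_q\mid n,\\ 0 & r_q\nmid n.\end{cases}$$
Rewriting $n\mid W_n$ as $v_q(n)\le v_q(W_n)$ for all $q$ then yields the clean characterization: for $n>1$, one has $n\in D_{d,c}$ if and only if $r_q\mid n$ and $v_q(n)\le s_q$ for every prime $q\mid n$. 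In particular $n\in D_{d,c}$ forces $r_q\mid n$ and $r_q\le q$ for each such $q$.

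For $G_V\subseteq D_{d,c}$ I would induct along the construction of $G$, using only the divisibility-sequence property. The base case $1\mid W_1$ is clear. Suppose $n\in D_{d,c}$ and the edge $(n,np)$ is adjoined. For primes $q\ne p$, $v_q(np)=v_q(n)\le v_q(W_n)\le v_q(W_{np})$ since $W_n\mid W_{np}$. For $q=p$, a type-$1$ edge gives $v_p(W_{np})\ge v_p(W_n)>v_p(n)$, hence $v_p(W_{np})\ge v_p(np)$; a type-$2$ edge gives $v_p(np)=1$ together with $p\mid W_p\mid W_{np}$. Either way $np\mid W_{np}$.

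The reverse inclusion $D_{d,c}\subseteq G_V$ is the heart of the matter, and the hard part is choosing which prime to strip; I would prove it by strong induction on $n$ by \emph{peeling off the largest prime factor}. Let $n>1$ lie in $D_{d,c}$, let $p$ be its largest prime divisor, and set $m=n/p$. For every prime $q\mid m$ with $q\ne p$ we have $r_q\le q<p$, so $p\nmid r_q$ and the divisibility $r_q\mid n$ descends to $r_q\mid m$; in the case $q=p$ (arising when $v_p(n)\ge2$) we have $v_p(r_p)\le1\le v_p(m)$ because $r_p\le p$, so again $r_p\mid m$. Combined with $v_q(m)\le v_q(n)\le s_q$, the characterization gives $m\in D_{d,c}$, hence $m\in G_V$ by induction. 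It remains to verify the edge $(m,mp)$ is legal. If $v_p(n)\ge2$ then $r_p\mid m$ and $v_p(W_m)=s_p\ge v_p(n)>v_p(m)$, a type-$1$ edge. If $v_p(n)=1$ and $r_p\ne p$ then $r_p<p$ gives $r_p\mid m$, so $p\mid W_m$ while $v_p(m)=0$, again type-$1$. Finally if $v_p(n)=1$ and $r_p=p$ then $p\in P_{d,c}$ and $v_p(m)=0$, a type-$2$ edge. In all cases $n=mp\in G_V$, completing the induction and the proof.
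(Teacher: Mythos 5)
Your proof is correct, and it follows the same overall skeleton as the paper's — both inclusions, with the reverse one handled by induction on the prime factorization and both directions ultimately resting on the rigidity of the $p$-adic valuations of $(W_n)$ — but the execution differs in two worthwhile ways. First, where the paper invokes Rice's theorem that $(\phi^n(0))$ is a rigid divisibility sequence, you prove the needed valuation structure from scratch: the identity $\phi^r(x)=W_r+x^dh_r(x)$ (exploiting that $0$ is the critical point) gives the stabilization $v_q(W_{kr_q})=s_q$, and hence the clean criterion that $n\in D_{d,c}$ iff $r_q\mid n$ and $v_q(n)\le s_q$ for every prime $q\mid n$; the paper never isolates this characterization, though it is implicit in its path construction. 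Second, for $D_{d,c}\subseteq G_V$ the paper builds a path upward from $1$, adjoining the primes of $n$ in \emph{increasing} order and checking each edge as it goes (using Lemma \ref{lem:smallestprimedivisor} to seed the first edge), whereas you induct downward by stripping the \emph{largest} prime. These are dual uses of the same key fact, namely that the rank of apparition satisfies $r_q\le q$, so that $r_q$ is supported on primes smaller than $q$: the paper uses it to see that the period of $0$ mod $p_i$ already divides the partial product $m$, and you use it to see that $r_q\mid n$ descends to $r_q\mid n/p$ when $p$ is the largest prime factor. The resulting path through the graph is the same in both arguments; your version is somewhat more self-contained and makes the case analysis at each edge (type 1 versus type 2) more explicit, at the cost of redoing work the paper delegates to the rigid divisibility literature.
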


\begin{thm} \label{propertiesofP} 
Let $\phi(x) = x^d + c$, where $c,d \in \mathbb{Z}$ and $d \geq 2$. Then $P_{d,c}$ satisfies the following.
\begin{enumerate}
\item $2 \in P_{d,c}$.
\item Every divisor of $c$ is an element of $D_{d,c}$. In particular, if $p$ is prime and $p \mid c$, then $p \in P_{d,c}$. 
\item If $p$ is prime and $d \equiv 1 \pmod {p-1}$, then $p \in P_{d,c}$.
\end{enumerate}
\end{thm}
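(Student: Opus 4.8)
The plan is to handle the three claims separately, each via an elementary congruence computation on the recursion $W_{n+1} = \phi(W_n) = W_n^d + c$ with $W_0 = 0$, $W_1 = c$.

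For part (1), I would simply compute $W_2 = \phi(c) = c^d + c$ and observe that modulo $2$ one has $c^d \equiv c \pmod 2$ (parity is preserved by powers), so $W_2 \equiv c + c \equiv 0 \pmod 2$. Hence $2 \mid W_2$, giving $2 \in D_{d,c}$, and since $2$ is prime, $2 \in P_{d,c}$. This step is immediate.

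For part (2), the key observation I would establish is that $c \mid W_n$ for every $n \ge 1$. This follows by induction: $W_1 = c \equiv 0 \pmod c$, and if $W_n \equiv 0 \pmod c$ then $W_{n+1} = W_n^d + c \equiv 0 + 0 \equiv 0 \pmod c$. Once this is in hand, any divisor $m$ of $c$ satisfies $m \mid c \mid W_m$, so $m \in D_{d,c}$; the prime case $p \mid c$ then lands in $P_{d,c}$ by definition. The only content here is the induction, which is routine.

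For part (3), the crucial input is that the hypothesis $d \equiv 1 \pmod{p-1}$ forces the power map to be the identity on $\mathbb{Z}/p\mathbb{Z}$: writing $d = 1 + k(p-1)$ and invoking Fermat's little theorem, one gets $x^d \equiv x \pmod p$ for every residue $x$ (the case $x \equiv 0$ being trivial). Consequently $\phi(x) = x^d + c \equiv x + c \pmod p$, so $\phi$ reduces to translation by $c$, and an easy induction yields $W_n \equiv nc \pmod p$ for all $n$. Taking $n = p$ gives $W_p \equiv pc \equiv 0 \pmod p$, so $p \mid W_p$ and thus $p \in P_{d,c}$.

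I do not anticipate a genuine obstacle in any of the three parts, as each reduces to a short induction. The most delicate point worth stating carefully is in part (3): one must verify $x^d \equiv x \pmod p$ for \emph{all} residues, including $x \equiv 0 \pmod p$, before concluding that $\phi$ collapses to the translation $x \mapsto x + c$; this is the single observation on which the claim turns.
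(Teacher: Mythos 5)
Your proposal is correct and follows essentially the same route as the paper: part (1) via the computation of $W_2=c^d+c$ (the paper factors it as $c^{d-1}(c+1)$, you argue mod $2$, which is the same observation), part (2) via the induction showing $c \mid W_n$, and part (3) via Fermat's little theorem reducing $\phi$ to the translation $x \mapsto x+c$ modulo $p$. Your explicit remark that $x^d \equiv x \pmod p$ must also hold at $x \equiv 0$ is a fine point to flag, and it holds since $d \ge 1$.
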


If $d$ is even, then we are able to fully characterize $P_{d,c}$.

\begin{thm} \label{eventhm} 
If $d$ is even, then
\begin{align*}
        P_{d,c} = \{2 \} \cup \{ p \text{ prime} : p \mid c \}.
\end{align*}
\end{thm}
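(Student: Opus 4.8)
The plan is to prove the two inclusions separately. The inclusion $\{2\} \cup \{p \text{ prime} : p \mid c\} \subseteq P_{d,c}$ holds for every $d \geq 2$ and is immediate from Theorem \ref{propertiesofP}: part (1) gives $2 \in P_{d,c}$, and part (2) gives every prime divisor of $c$. The content of the theorem is therefore the reverse inclusion, and this is where the hypothesis that $d$ is even must enter. I would prove its contrapositive: if $p$ is an odd prime with $p \nmid c$, then $p \notin P_{d,c}$.

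So fix such a $p$ and suppose for contradiction that $p \in P_{d,c}$, i.e. $p \mid W_p = \phi^p(0)$. Then $0$ is a purely periodic point of $\phi$ on $\mathbb{Z}/p\mathbb{Z}$ whose period divides $p$; since $p$ is prime, the period is either $1$ or $p$. Period $1$ means $\phi(0) = c \equiv 0 \pmod p$, contradicting $p \nmid c$. Hence the period is exactly $p$, so the orbit $0, \phi(0), \dots, \phi^{p-1}(0)$ consists of $p$ distinct residues and therefore exhausts $\mathbb{Z}/p\mathbb{Z}$; that is, $\phi$ acts on $\mathbb{Z}/p\mathbb{Z}$ as a single $p$-cycle. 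This is exactly the equivalence recorded in the introduction: $p \mid \phi^p(0)$ with $p \nmid c$ forces the system to be a single orbit of size $p$.

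The key step is to rule out this $p$-cycle. A single $p$-cycle is in particular a permutation of $\mathbb{Z}/p\mathbb{Z}$, so $\phi$ would have to be a bijection. Writing $\phi(x) = x^d + c$ and using that translation by $c$ is a bijection, $\phi$ is a bijection if and only if $x \mapsto x^d$ is. When $d$ is even and $p$ is odd we have $1^d = (-1)^d$ while $1 \not\equiv -1 \pmod p$ (as $p > 2$), so $x \mapsto x^d$ is not injective; equivalently $\gcd(d, p-1) \geq 2$ since $p-1$ is even. Either way $\phi$ is not a permutation of $\mathbb{Z}/p\mathbb{Z}$, contradicting that it is a single $p$-cycle. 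This contradiction yields $p \notin P_{d,c}$ and completes the reverse inclusion.

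I do not anticipate a serious obstacle: the argument is short once the dictionary between $p \mid W_p$ and the cyclic structure of $\phi$ modulo $p$ (stated in the introduction) is in hand. The one point warranting care is avoiding circularity with Theorem \ref{thm:s}. Rather than invoking that theorem, I would argue directly from the non-injectivity of $x \mapsto x^d$ for even $d$, so that Theorem \ref{thm:s} can instead be \emph{deduced} afterward from this characterization of $P_{d,c}$, together with the observation that an odd prime dividing $c$ fixes $0$ and hence cannot produce a single $p$-cycle.
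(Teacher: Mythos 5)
Your proof is correct and follows essentially the same route as the paper's: both reduce to the dichotomy that the period of $0$ modulo $p$ divides the prime $p$, handle period $1$ by $p \mid W_1 = c$, and kill the period-$p$ case using the fact that $x \mapsto x^d$ identifies $x$ with $-x$ when $d$ is even and $p$ is odd. The only cosmetic difference is where that fact is applied: the paper notes that a single $p$-cycle forces $0$ to have a unique preimage, so any $d$-th root $r$ of $-c$ satisfies $r \equiv -r \equiv 0 \pmod p$ and hence $p \mid c$, whereas you observe that $\phi(1) = \phi(-1)$ with $1 \not\equiv -1$ already prevents $\phi$ from being a permutation of $\mathbb{Z}/p\mathbb{Z}$ at all.
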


Theorem \ref{thm:s} is an immediate consequence.

Two main tools we use in our investigation are the notions of a \emph{rigid divisibility sequence} and of a \emph{primitive prime divisor}.

An integer sequence $(a_n)$ is a \emph{rigid divisibility sequence} if for every prime $p$ the following two properties hold:
\begin{enumerate}
	\item if $v_p(a_n)>0$, then $v_p(a_{nk}) = v_p(a_n)$ for all $k \geq 1$, and
	\item if $v_p(a_n)>0$ and $v_p(a_m)>0$, then $v_p(a_{n}) = v_p(a_m) = v_p(a_{\text{gcd} (m,n)})$.
\end{enumerate}
In particular, rigid divisibility sequences are divisibility sequences.

Rice \cite{r07} showed that for any polynomial $\phi \in \mathbb{Z}[x]$ of degree $d \geq 2$ where $0$ is a wandering point (i.e. of infinite orbit), the integer sequence $(\phi^n(0))$ is a rigid divisibility sequence if and only if the coefficient of the linear term of $\phi$ is zero. In particular, this means that the orbit of zero under $\phi(x) = x^d + c$, where $c,d \in \mathbb{Z}$ and $d \ge 2$, is a rigid divisibility sequence.

Given a sequence $(a_n)$ of integers, the term $a_n$ contains a \emph{primitive prime divisor} if there exists a prime $p$ such that $p \mid a_n$, but $p \nmid a_i$ for all $0<i<n$.  The study of primitive prime divisors dates back to Bang and Zsigmondy, who showed that every term of the sequence $(a^n-b^n)$, where $a,b \in \mathbb{Z}$ and $\gcd(a,b)=1$, has a primitive prime divisor \cite{Bang,Zsigmondy}.  Carmichael's Theorem asserts that the same is true for the Fibonacci numbers beyond the 12th term \cite{Carmichael}.  The \emph{Zsigmondy set} is the set of terms not having a primitive prime divisor; for the Fibonacci numbers, it is $\{1,2,6,12\}$.  Similarly, Silverman has shown that elliptic divisibility sequences have finite Zsigmondy sets \cite{SilvermanZsigmondy}.

Turning to dynamical sequences, Rice \cite{r07} showed that if $\phi(x) \in \mathbb{Z}[x]$ is a monic polynomial of degree $d \geq 2$, and $(\phi^n(0))$ is an unbounded rigid divisibility sequence, then all but finitely many terms contain a primitive prime divisor. Ingram and Silverman \cite{i09} generalized the results to rational functions over number fields. Doerksen and Haensch \cite{d12} extended upon this by finding explicit upper bounds on the Zsigmondy set for certain polynomial maps.

The following examples illustrate our results.
\begin{example}
Suppose $\phi (x) = x^2+3$. Then the orbit of 0 is
\begin{align*}
0, 3, 12, 147, 21612, 467078547, \ldots.
\end{align*}
Here,
\begin{align*}
D_{2,3} = \{1, 2, 3, 4, 6, 12, 21, 42, \ldots\} \atext{and} P_{2,3} = \{2,3\}
\end{align*}
by Theorems \ref{propertiesofP} and \ref{eventhm}. The index divisibility graph is shown in Figure \ref{fig:g23}.

\begin{figure}[ht]

\begin{tikzpicture}[>=stealth,every node/.style={circle}]

\node at (-8,0) (1) {$1$};
\node[draw,outer sep = 1mm] at (-6,1) (2) {$2$};
\node[draw,outer sep = 1mm] at (-6,-1) (3) {$3$};
\node at (-4.5,0) (6) {$6$};
\node at (-4,1.5) (4) {$4$};
\node at (-4,-1.5) (21) {$21$};
\node at (-2, 2) (72) {$7204$};
\node at (-2, 0.67) (12) {$12$};
\node at (-2, -0.67) (42) {$42$};
\node at (-2, -2) (14) {$147$};

\draw[->] (1) edge node[above left=-1mm,pos=.5,font = \tiny] {$1,2$} (2);
\draw[->] (1) edge node[above right=-1mm, pos=.5, font=\tiny] {$1,2$} (3);
\draw[->] (2) edge node[above right=-1mm, pos=.5, font=\tiny] {$1,2$} (6);
\draw[->] (2) edge node[above=-1mm, pos=.5, font=\tiny] {$1$} (4);
\draw[->] (3) edge node[above left=-1mm, pos=.5, font=\tiny] {$1,2$} (6);
\draw[->] (3) edge node[above=-1mm, pos=.5, font=\tiny] {$1$} (21);
\draw[->] (4) edge node[above=-1mm, pos=.5, font=\tiny] {$1$} (72);
\draw[->] (4) edge node[above right=-1mm, pos=.5, font=\tiny] {$1,2$} (12);
\draw[->] (6) edge node[above=-1mm, pos=.5, font=\tiny] {$1$} (12);
\draw[->] (6) edge node[above=-1mm, pos=.5, font=\tiny] {$1$} (42);
\draw[->] (21) edge node[above left=-1mm, pos=.5, font=\tiny] {$1,2$} (42);
\draw[->] (21) edge node[above=-1mm, pos=.5, font=\tiny] {$1$} (14);
\end{tikzpicture}

\caption{A portion of the index divisibility graph for $\phi(x) = x^2+3$. The circled vertices are elements of $P_{2,3}$, and edges are labeled by their type.} \label{fig:g23}
\end{figure}
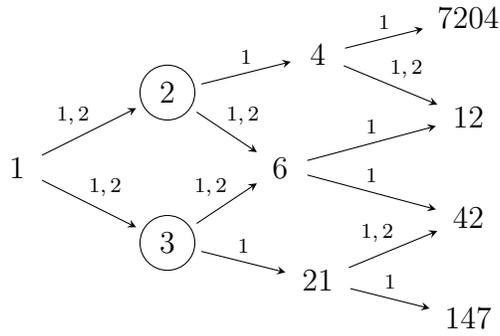

\end{example}

Notice in Figure \ref{fig:g23} that all type $2$ edges are also type $1$ edges. However, this is not always the case, as shown in Figure \ref{fig:g34}.

\begin{example} \label{ex:x^3+4}
Suppose $\phi (x) = x^3+4$. Then the orbit of 0 is:
\begin{align*}
0, 4, 68, 314436, \ldots.
\end{align*}
The index divisibility graph is illustrated in Figure \ref{fig:g34}.

\begin{figure}[ht]

\begin{tikzpicture}[>=stealth,every node/.style={circle}]

\node at (0,0) (1) {$1$};
\node[draw,outer sep = 1mm] at (2,2) (2) {$2$};
\node[draw,outer sep = 1mm] at (2,0) (3) {$3$};
\node[draw,outer sep = 1mm] at (2,-2) (11) {$\phantom{2}$};
\node at (2,-2) {$11$};
\node at (4,3) (34) {$\phantom{2}$};
\node at (4,3) {$34$};
\node at (4,1.5) (4) {$4$};
\node at (4,0) (6) {$6$};
\node at (4,-1.5) (22) {$\phantom{2}$};
\node at (4,-1.5) {$22$};
\node at (4,-3) (33) {$\phantom{2}$};
\node at (4,-3) {$33$};
\node at (6,2) (20) {$\phantom{2}$};
\node at (6,2) {$20$};
\node at (6,.5) (12) {$\phantom{2}$};
\node at (6,.5) {$12$};
\node at (6,-1) (44) {$\phantom{2}$};
\node at (6,-1) {$44$};

\draw[->] (1) edge node[above left=-1.5mm,pos=.5,font = \tiny] {$1,2$} (2);
\draw[->] (1) edge node[above=-3mm, pos=.5, font=\tiny] {$1,2$} (3);
\draw[->] (1) edge node[above=-1.5mm, pos=.5, font=\tiny] {$2$} (11);
\draw[->] (2) edge node[above=-1.5mm, pos=.5, font=\tiny] {$1$} (4);
\draw[->] (2) edge node[above=-1.5mm, pos=.5, font=\tiny] {$1$} (34);
\draw[->] (2) edge node[above=-1.5mm, pos=.5, font=\tiny] {$1,2$} (6);
\draw[->] (2) edge node[below=0mm, pos=.15, font=\tiny] {$2$} (22);
\draw[->] (3) edge node[above=-3mm, pos=.15, font=\tiny] {$1,2$} (6);
\draw[->] (3) edge node[above=-0.5mm, pos=.3, font=\tiny] {$2$} (33);
\draw[->] (11) edge node[above=-2.5mm, pos=.15, font=\tiny] {$1,2$} (22);
\draw[->] (11) edge node[above=-2.5mm, pos=.4, font=\tiny] {$1,2$} (33);
\draw[->] (4) edge node[above=-1.5mm, pos=.5, font=\tiny] {$1$} (20);
\draw[->] (4) edge node[above=-2.5mm, pos=.5, font=\tiny] {$1,2$} (12);
\draw[->] (4) edge node[above=-1.5mm, pos=.35, font=\tiny] {$2$} (44);
\draw[->] (6) edge node[above=-1.5mm, pos=.15, font=\tiny] {$1$} (12);
\draw[->] (22) edge node[above=-1.5mm, pos=.5, font=\tiny] {$1$} (44);

\end{tikzpicture}

\caption{A portion of the index divisibility graph for $\phi(x) = x^3+4$. The circled vertices are elements of $P_{3,4}$, and edges are labeled by their type.} \label{fig:g34}
\end{figure}
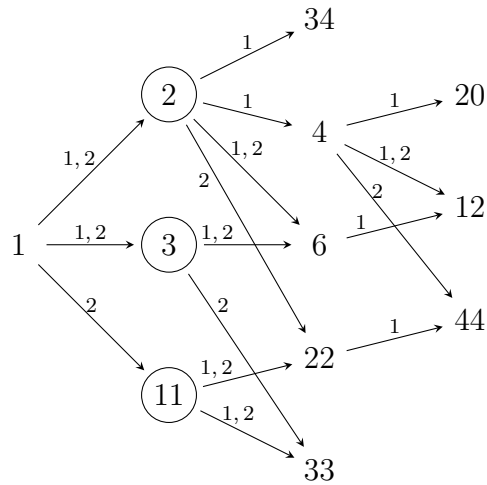
\end{example}

In Section 2, we study index divisibility and prove Theorems \ref{thm:s}, \ref{maintheorem}, \ref{propertiesofP}, and \ref{eventhm}. 

In Section 3, we classify the finiteness of $D_{d,c}$:  it is finite exactly when $(d,c) = (2,1), (2,-2)$, or $(d,1)$ where $d \ge 4$ is even (see Theorem \ref{th:D finite}).  

In Section 4, we study $P_{d,c}$ and its subset $S_{d,c}$ in the case where $d$ is odd, and prove Theorem \ref{th:exceptional primes}.

In Section 5, as a computational experiment, we find all pairs $(p,c)$, where $0 < c < p/2$ and $p \le 37619$, for which $p$ is in $S_{3,c}$ (see Figure \ref{fig:pvsc}). We provide some heuristics to support Conjecture \ref{conj:s}.

Finally, in Section 6, we ask the question, for a fixed $n$, of which pairs $(d,c)$ satisfy $n \in D_{d,c}$. 

\subsection*{Acknowledgements}
The authors would like to thank the Boulder Valley School District's Science Research Seminar program for making this research project possible, and would like to thank the first author's teacher in that program, Ryan O'Block, for his support.

\section{Index divisibility}

For the remainder of the paper, we maintain the notation presented in the introduction, namely $\phi(x) = x^d + c$ is an integral polynomial of degree at least 2, $W_n = \phi^n(0)$, $D_{d,c} = \{n \in \mathbb{Z} : n \geq 1, n  \mid  W_n \}$, and $P_{d,c}$ is the set of primes in $D_{d,c}$.

Before proceeding to the proofs, we identify two significant properties of $D_{d,c}$.

\begin{lem} \label{lem:smallestprimedivisor}
Suppose $n \in D_{d,c}$ and let $p$ be the smallest prime divisor of $n$. Then $p \in D_{d,c}$.
\end{lem}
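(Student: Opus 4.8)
The plan is to reduce the statement to a claim about the period of $0$ in the reduced dynamical system $\phi \colon \mathbb{Z}/p\mathbb{Z} \to \mathbb{Z}/p\mathbb{Z}$, and then to exploit the minimality of $p$ against the trivial bound on cycle length. First I would observe that since $p \mid n$ and $n \mid W_n$, we have $p \mid W_n$. Because $W_n = \phi^n(0)$, this says $\phi^n(0) \equiv 0 = W_0 \pmod p$, and applying $\phi$ repeatedly gives $W_{k+n} \equiv W_k \pmod p$ for all $k \ge 0$. Thus the orbit of $0$ modulo $p$ is \emph{purely} periodic; let $m$ denote its exact period. A short argument (if $W_t \equiv 0 \pmod p$ for some $0 < t < m$, then $W_{k+t} \equiv W_k \pmod p$ for all $k$, forcing $m \mid t$) shows that $0$ recurs precisely at the multiples of $m$, i.e. $\{k \ge 1 : p \mid W_k\} = \{m, 2m, 3m, \dots\}$. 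In particular, from $p \mid W_n$ we obtain $m \mid n$.

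The crux is then to show $m \mid p$, for this immediately yields $p \mid W_p$ and hence $p \in P_{d,c} \subseteq D_{d,c}$. Here I would invoke the elementary bound $m \le p$: the period $m$ is the length of the cycle containing $0$ in the functional graph of $\phi$ on the $p$-element set $\mathbb{Z}/p\mathbb{Z}$, and a cycle cannot be longer than the ambient set. If $m = 1$, there is nothing more to prove. Otherwise $m \ge 2$ admits a prime divisor $q$; since $q \mid m \mid n$, the prime $q$ divides $n$, whence $q \ge p$ by the minimality of $p$. On the other hand $q \le m \le p$. Combining these gives $p \le q \le m \le p$, so all four quantities are equal and $m = p$. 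In either case $m \mid p$, which completes the argument.

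I expect the only delicate point to be the bookkeeping of the first paragraph, namely verifying that the orbit of $0$ is purely periodic and that $0$ returns exactly along the arithmetic progression of multiples of $m$, rather than the final combinatorial squeeze, which is immediate once $m \le p$ is in hand. It is worth noting that this argument uses only the iterative definition of $(W_n)$ and the finiteness of $\mathbb{Z}/p\mathbb{Z}$, and not the full strength of the rigid divisibility property; the divisibility-sequence structure will instead be needed for the subsequent theorems.
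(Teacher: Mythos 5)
Your proposal is correct and follows essentially the same route as the paper's proof: reduce to the exact period $m$ of $0$ modulo $p$, note $m \mid n$ and $m \le p$, and use the minimality of $p$ among the prime factors of $n$ to force $m \in \{1, p\}$. The only cosmetic difference is that in the case $m = 1$ you conclude $p \mid W_p$ from the return-time characterization rather than from the divisibility-sequence relation $W_1 \mid W_p$ as the paper does; both are valid.
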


\begin{proof}
Let $n \in D_{d,c}$, and write $n = pm$, where $p$ is the smallest prime factor of $n$. Then $p \mid W_n$ as $p \mid n$ and $n \mid W_n$. In particular, $0$ is periodic modulo $p$, so letting $b$ denote the period of $0$, it follows that $0 < b \le p$, $p \mid W_b$, and $b \mid n$. However, since $p$ is the smallest factor of $n$ greater than $1$, either $b = 1$ or $b = p$. If $b = p$, then $p \mid W_p$ as desired. Otherwise, if $b = 1$, then $p \mid W_1$, and hence $p \mid W_p$ since $W_1 \mid W_p$.
\end{proof}

\begin{lem} \label{lem:relprime}
If $a,b \in D_{d,c}$ are relatively prime, then $ab \in D_{d,c}$.
\end{lem}

\begin{proof}
Let $a$ and $b$ be relatively prime numbers in $D_{d,c}$. Since $(W_n)$ is a rigid divisibility sequence, we have that $a \mid ab$ implies $W_a \mid W_{ab}$, and $b \mid ab$ implies $W_b \mid W_{ab}$. Then because $a \mid W_a$, $a \mid W_{ab}$. Similarly, because $b \mid W_b$, we have $b \mid W_{ab}$. Since $a$ and $b$ are relatively prime, $ab \mid W_{ab}$, and so $ab \in D_{d,c}$. 
\end{proof}

We now proceed to prove Theorem \ref{maintheorem}.

\begin{proof}[Proof of \ref{maintheorem}.]
First we show $G_V \subseteq D_{d,c}$. To begin, we have $1 \mid W_1$, and so $1 \in D_{d,c}$.

Next we show that if $n \in D_{d,c}$ and $(n,np) \in G_E$ (the edge set of $G$), then $pn \in D_{d,c}$. We examine edges of type 1. Suppose there exist $n \in D_{d,c}$ and a prime $p$ such that $v_p(W_n) > v_p (n)$. Since $n \mid W_n$ and $v_p(W_n) > v_p (n)$, we see that $np \mid W_n$. Then since $(W_n)$ is a rigid divisibility sequence, $n  \mid np$ implies $W_n \mid W_{np}$. Thus $np \mid W_{np}$, and so $np \in D_{d,c}$. 

For edges of type 2, if $p \in P_{d,c}$ and $p \nmid n$, then $np \in D_{d,c}$ by Lemma \ref{lem:relprime}. Thus we have shown that $G_V \subseteq D_{d,c}$. 

We now proceed to show $D_{d,c} \subseteq G_V$. Suppose $n \in D_{c,d}$. To prove that $n \in G_V$, we show that $G$ contains a path from $1$ to $n$. If $n = 1$, there is nothing to show, so let $n = p_1^{\alpha_1} p_2^{\alpha_2} \cdots p_k^{\alpha_k}$ be the prime factorization of $n$, where $p_1<p_2<p_3< \cdots <p_k$. From Lemma \ref{lem:smallestprimedivisor}, we know that $p_1 \in D_{d,c}$, hence $(1,p_1)$ is an edge of type 2 in $G$.  

Now suppose $1 \le i \le k$ and $m$ is the largest divisor of $n$ supported on primes $p_j$, where $j < i$.  Then we have two observations:
\begin{enumerate}
        \item If $p_i \in P_{d,c}$, then $(m, mp_i)$ is an edge of type $2$.  If $p_i \notin P_{d,c}$, then $0$ has order less than $p_i$ modulo $p_i$, which implies $p_i \mid W_m$.  Then $v_{p_i}(W_m) > v_{p_i}(m)$, and so $(m, mp_i)$ is an edge of type $1$.
        \item Suppose $1 \le t < \alpha_i$.  Furthermore, $p_i \mid W_{mp_i} \mid W_{mp_i^t}$ (otherwise, the order of $0$ modulo $p_i$ exceeds $p_i$).  By rigid divisibility, $v_{p_i}(W_{mp_i^t}) = v_{p_i}(W_n) \ge \alpha_i > t = v_{p_i}(mp_i^t)$.  Therefore, we also have an edge of type 1: $(mp_i^t, mp_i^{t+1})$.  
\end{enumerate}
Combining these observations, starting with $1$, we have the following path of directed edges from $1$ to $n$:
\begin{align*}
1 &\xrightarrow[]{2} p_1 \xrightarrow[]{1} p_1^2 \xrightarrow[]{1} \cdots \xrightarrow[]{1} p_1^{\alpha_1}
 \\&\xrightarrow[]{1or2}  p_1^{\alpha_1} p_2 \xrightarrow[]{1} p_1^{\alpha_1} p_2^2 \xrightarrow[]{1} \cdots \xrightarrow[]{1} p_1^{\alpha_1} p_2^{\alpha_2}\\
 &\xrightarrow[]{1or2} \cdots \\ &\xrightarrow[]{1or2}  p_1^{\alpha_1} p_2^{\alpha_2}\cdots p_k \xrightarrow[]{1} p_1^{\alpha_1}  p_2^{\alpha_2}\cdots p_k^2 \xrightarrow[]{1} \cdots \xrightarrow[]{1} p_1^{\alpha_1}  p_2^{\alpha_2} \cdots p_k^{\alpha_k} = n. 
\end{align*}
Thus, $D_{d,c} \subseteq G_V$.
\end{proof}

We next prove Theorem \ref{propertiesofP}.

\begin{proof}[Proof of \ref{propertiesofP}.]
First, $W_2 = c^d+c = c^{d-1}(c+1)$. It follows that $2 \mid W_2$, and thus $2 \in P_{d,c}$. 

Next, to prove that every prime divisor of $c$ is an element of $D_{d,c}$, it suffices to show that $c \mid W_n$ for all $n$, for then it follows that every divisor of $c$ is an element of $D_{d,c}$. We proceed via induction on $n$. When $n=0$, we have $W_n = W_0 = 0$, hence $c \mid W_0$. Next assume the statement holds for $n=k$. Then $W_{k+1} = (W_k)^d + c$. Now since $c \mid W_k$, we have $c \mid (W_k)^d$, and so $c \mid W_{k+1}$.

For the third property, we show that if $p$ is prime, then $p \in P_{d,c}$ if $d \equiv 1 \pmod {p-1}$. Let $d = (p-1)k+1$, where $k \in \mathbb{Z}$. We have $\phi(x) = x^d + c = x^{(p-1)k+1} + c \equiv x+c \pmod p$, so $\phi^p(x) \equiv x+pc \equiv x \pmod p$. In particular, this means that $W_p = \phi^p(0) \equiv 0 \pmod p$, so $p \in P_{d,c}$.
\end{proof} 

We now prove Theorem \ref{eventhm}.

\begin{proof}[Proof of \ref{eventhm}.]
Let $d$ be even. We show that if $p$ is an odd prime, then $p \in P_{d,c}$ only if $p \mid c$.  

Suppose that $p \in P_{d,c}$. Then $p \mid W_p$, and the period of $0$ modulo $p$ is a divisor of $p$. If the period of $0$ is $1$, then $p \mid W_1 = c$. Otherwise if the period of $0$ is $p$, then $0$ has a unique preimage modulo $p$. In particular, $\sqrt[d]{-c} \equiv - \sqrt[d]{-c} \pmod p$.  Therefore $\sqrt[d]{-c} \equiv 0 \pmod p$, so $c \equiv 0 \pmod p$.\end{proof}

In conjunction with Theorem \ref{propertiesofP}, Theorem \ref{eventhm} provides a full characterization for $P_{d,c}$ when $d$ is even.   In particular, we can now prove Theorem \ref{thm:s}.

\begin{proof}[Proof of \ref{thm:s}]
When $c$ is odd, the orbit of $0$ has period $2$.  When $c$ is even, the orbit of $0$ has period $1$.  When $p \mid c$, the orbit of $0$ has period $1$.
\end{proof}

\section{Cardinality of $D_{d,c}$}

In Theorem \ref{th:D finite}, we identify all pairs $(d,c)$ for which $D_{d,c}$ is finite. But first, we note some simple infinite cases where $D_{d,c}$ is explicit.

\begin{lem} \label{lem:c=0,-1}
\
\begin{enumerate}
\item For all $d$, $D_{d,0}$ is the set of positive integers.
\item If $d$ is even, then $D_{d,-1}$ is the set of even positive integers.
\end{enumerate}
\end{lem}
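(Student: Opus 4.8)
The plan is to compute the orbit $(W_n)$ explicitly in both cases, since for $c = 0$ and $c = -1$ the dynamics of $\phi$ at $0$ are trivial, and then to read off the index divisibility condition $n \mid W_n$ directly.

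For part (1), set $c = 0$, so that $\phi(x) = x^d$ and $\phi(0) = 0$. First I would prove by induction that $W_n = 0$ for every $n \geq 0$: the base case is $W_0 = 0$, and if $W_k = 0$ then $W_{k+1} = W_k^d = 0$. Since $n \mid 0$ for every positive integer $n$, this immediately gives $D_{d,0} = \{n \in \mathbb{Z} : n \geq 1\}$.

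For part (2), set $c = -1$ with $d$ even, so $\phi(x) = x^d - 1$. The key step is to establish that the orbit of $0$ is purely periodic of period $2$, alternating between $0$ and $-1$. I would compute $W_1 = \phi(0) = -1$ and $W_2 = \phi(-1) = (-1)^d - 1 = 0$, where the final equality is exactly where evenness of $d$ enters, via $(-1)^d = 1$. An easy induction then yields $W_n = 0$ for $n$ even and $W_n = -1$ for $n$ odd. With the orbit known, the condition $n \mid W_n$ reduces to a parity check: for even $n$ we have $W_n = 0$, so $n \mid W_n$ and $n \in D_{d,-1}$; for odd $n > 1$ we have $W_n = -1$, and $n \nmid 1$, so $n \notin D_{d,-1}$. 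The index $n = 1$ divides $W_1 = -1$ trivially, as it must for any $D_{d,c}$, so it is the only odd element; I would record this as a remark so that the stated description of $D_{d,-1}$ is understood up to this trivial vertex.

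Neither part presents a genuine obstacle, as both reduce to short inductions. The single point demanding attention is the role of the parity of $d$ in evaluating $(-1)^d$, which is precisely what forces the period-$2$ behavior in part (2): were $d$ odd, the point $-1$ would instead be fixed by $\phi$ and the characterization would fail. The only other thing to track carefully is the bookkeeping of the trivial index $n = 1$.
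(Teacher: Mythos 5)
Your proof is correct and follows the same route as the paper: compute $W_n$ explicitly ($W_n = 0$ for all $n$ when $c=0$; $W_n$ alternating $-1, 0$ when $c=-1$ and $d$ is even) and read off the divisibility condition. Your side remark about $n=1$ is in fact a point the paper's statement glosses over---since $1 \mid W_1$ always, $1$ is an odd element of $D_{d,-1}$, so the lemma as stated should strictly read $D_{d,-1} = \{1\} \cup \{\text{even positive integers}\}$; flagging this is appropriate and does not affect the argument.
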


\begin{proof}
If $c = 0$, then $W_n = 0$ for all $n$. When $c = -1$, then
\begin{align*}
W_n = \begin{cases*}
0 & if $n$ is even\\
-1 & if $n$ is odd.
\end{cases*}
\end{align*}
In both cases, the result is immediate.
\end{proof}

We now provide a simple yet sufficient condition for $D_{d,c}$ to be infinite.

\begin{lem} \label{lem:infinite}
If there exists $n \in D_{d,c}$ such that $n \ge 3$, then $D_{d,c}$ is infinite.
\end{lem}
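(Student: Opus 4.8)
The plan is to show that every element $m\in D_{d,c}$ with $m\ge 3$ admits a strictly larger element of $D_{d,c}$, and then iterate. Concretely, I would first reduce the lemma to finding, for such an $m$, a prime $p$ with $v_p(W_m)>v_p(m)$: given one, the rule defining type-$1$ edges together with Theorem~\ref{maintheorem} gives $mp\in D_{d,c}$, and $mp>m\ge 3$, so repeating the construction produces a strictly increasing (hence infinite) sequence in $D_{d,c}$, provided each element produced again satisfies the hypothesis needed to continue. Such a prime is easy to locate in two of the three logically possible cases: if $W_m=0$ then $v_p(W_m)=\infty>v_p(m)$ for every $p$, while if $|W_m|>m$ then (since $m\mid W_m$ and $W_m\neq 0$) the integer $W_m/m$ has absolute value at least $2$, so any prime factor $p$ of $W_m/m$ works. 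As $m\mid W_m$ forces $|W_m|\in\{0,m,2m,\dots\}$, the only obstruction is the borderline case $W_m=\pm m$.

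Thus the real content is the global claim that, whenever $D_{d,c}$ contains an element $\ge 3$, one has $W_k=0$ or $|W_k|>k$ for \emph{every} $k\ge 3$; this is exactly what keeps the iteration from stalling. The engine here is an elementary escape estimate for the orbit of $0$ over $\mathbb{Z}$. Using $|W_{k+1}|\ge |W_k|^{d}-|c|\ge |W_k|^2-|c|$, I would show that once $|W_{k_0}|\ge |c|+1$ (and $\ge 2$) the magnitudes are strictly increasing, with $|W_k|\ge |W_{k_0}|+(k-k_0)$ for all $k\ge k_0$, so they outgrow the index. It then remains to check that this escape threshold is crossed by index $3$ in all the ``generic'' cases: when $|c|\ge 3$ one computes $|W_2|\ge 2|c|$ directly, and when the orbit of $0$ is unbounded the threshold is crossed eventually and in fact by $W_2$ or $W_3$ after a short computation of the first few terms.

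The only systems escaping this argument are those with \emph{bounded} orbit and $|c|\le 2$, and here I would isolate the short explicit list: $c=0$ (all $d$); $c=-1$ with $d$ even; and $c=-2$ with $d=2$. The first two are handled immediately by Lemma~\ref{lem:c=0,-1}, which already shows $D_{d,c}$ is infinite, so the conclusion holds outright; the last is $\phi(x)=x^2-2$, whose orbit is $0,-2,2,2,\dots$, giving $W_k=2$ for all $k\ge 2$ and no $m\ge 3$ with $m\mid W_m$, so the hypothesis is vacuous. I expect the main obstacle to be precisely this bounded/small-$|c|$ bookkeeping together with the borderline equality $W_m=\pm m$: the escape estimate is the crux, and its constant must be calibrated so that escape is guaranteed by index $3$, after which the strictly increasing sequence of multiples furnishes infinitely many elements of $D_{d,c}$.
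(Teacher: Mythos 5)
Your proposal is correct, and it takes a genuinely different route from the paper. The paper's proof invokes the primitive prime divisor theorem for rigid divisibility sequences (citing Doerksen--Haensch) to produce a prime $p \mid W_n$ with $p \ge n$, and then splits into the cases $\gcd(p,n)=1$ (new edge immediately) and $p=n$ (where it writes $W_p = pm$, extracts a prime factor of $m$ if $|m|>1$, and uses a growth estimate --- valid only for $d$ odd, via a sign argument, with $d$ even dispatched separately through Theorem~\ref{eventhm} --- to exclude $W_p \in \{0,\pm p\}$). You bypass the primitive prime divisor machinery entirely: since $m \mid W_m$, the only way to fail to find a prime $p$ with $v_p(W_m)>v_p(m)$ is the borderline case $W_m = \pm m$, and you kill that borderline case for \emph{all} $k \ge 3$ at once with the triangle-inequality estimate $|W_{k+1}| \ge |W_k|^2 - |c|$, which works uniformly in $d$ (no sign argument needed) at the cost of checking that the escape threshold $|W_{k_0}| \ge \max(|c|+1,2)$ is reached by index $2$ or $3$ in the finitely many small-$|c|$ configurations, with the bounded orbits ($c=0$; $c=-1$, $d$ even; $c=-2$, $d=2$) isolated and handled by Lemma~\ref{lem:c=0,-1} or vacuously. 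Your endgame coincides with the paper's (both extract a prime factor of $W_m/m$ and both lean on orbit growth), but your argument is self-contained and elementary where the paper's rests on an imported theorem; the paper's version, in exchange, avoids your small-case bookkeeping and makes the appearance of a \emph{new} prime (one not dividing earlier terms) transparent. One small calibration to watch, which you flag yourself: for $c=\pm 1$ the threshold at $k_0=2$ only yields $|W_k|\ge k$, so you must restart the induction at $k_0=3$ using the actual value $|W_3|\ge 5$ to get the strict inequality $|W_k|>k$.
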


\begin{proof}
Suppose that $n \in D_{d,c}$ for some $n \geq 3$. From \cite{d12}, $W_n$ contains a primitive prime divisor $p$, and since $p$ is primitive, we have $n \le p$. Since $p$ appears as a divisor in the sequence, $0$ is periodic modulo $p$, and the period cannot exceed $p$. Hence either $p = n$, or $p$ and $n$ are coprime.  If the latter holds, then, by Theorem \ref{maintheorem}, there is an edge of type $2$: $(n,np)$.  This implies that $n$ is not the largest element of $D_{d,c}$.  Therefore it suffices to consider the case $p = n$.

First, suppose that $d$ is even and $p = n$.  Then, by Theorem \ref{eventhm}, we have $p \mid c$, so that $p \mid W_n$ for all $n$.  This contradicts primitivity, so $d$ must be odd.

Therefore, suppose that $d$ is odd and $p=n$.  In this case, write $W_p = pm$ for some integer $m$. If $|m| > 1$, then for each prime divisor $q$ of $m$, the index divisibility graph contains the edge $(p, pq)$, hence $p$ is not the largest element of $D_{d,c}$.

Thus we are left considering the case $d$ is odd, $p=n$, and $W_p \in \{0, \pm p\}$.  However, we claim that this is not possible, by the growth of $W_n$.  For, since $d$ is odd, the signs of $W_n$, $W_n^d$, and $c$ are all the same by induction.  This implies that $|W_{n+1}| = |W_n^d + c| = |W_n^d| + |c| \ge |W_n|^d$.  In particular, since $|W_2| \ge 2$, we have $|W_n| > 2^{d^{n-2}}$.  (Here we use that $|c| \ge 1$. The case $c = 0$ is covered by Lemma \ref{lem:c=0,-1}.)  This rules out $|W_p| \le p$ for any $p \ge 3$.
\end{proof}

Consequently, $D_{d,c}$ is infinite in most cases.

\begin{thm} \label{th:D finite}
The set $D_{d,c}$ is finite if and only if either
\begin{enumerate}
\item $d$ is even and $c = 1$, or 
\item $d = 2$ and $c = -2$.
\end{enumerate} 
Moreover, if $D_{d,c}$ is finite, then $D_{d,c} = \{1,2\}$.
\end{thm}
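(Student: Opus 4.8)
The plan is to deduce everything from Lemma \ref{lem:infinite}. Since $1 \mid W_1$ and $2 \in P_{d,c}$ by Theorem \ref{propertiesofP}(1), we always have $\{1,2\} \subseteq D_{d,c}$; and Lemma \ref{lem:infinite} says that $D_{d,c}$ is infinite as soon as it contains any $n \ge 3$. Hence $D_{d,c}$ is finite if and only if $D_{d,c} = \{1,2\}$, and the whole theorem reduces to deciding, for each pair $(d,c)$, whether $D_{d,c}$ contains an element $\ge 3$. This reduction also immediately supplies the ``moreover'' clause.

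First I would rule out most pairs using the three properties of Theorem \ref{propertiesofP} together with Lemma \ref{lem:c=0,-1}. Applying Theorem \ref{propertiesofP}(3) with $p = 3$ (so that $p-1 = 2$), every odd $d$ has $3 \in P_{d,c} \subseteq D_{d,c}$, forcing $D_{d,c}$ infinite; thus finiteness requires $d$ even. Applying Theorem \ref{propertiesofP}(2), if $c$ has an odd prime divisor $p$ then $p \in D_{d,c}$ with $p \ge 3$, again infinite; so finiteness requires $|c|$ to be $0$ or a power of $2$. Finally Lemma \ref{lem:c=0,-1} disposes of $c = 0$ (all $n$ lie in $D_{d,c}$) and of $c = -1$ with $d$ even (all even $n$ lie in $D_{d,c}$), both infinite. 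The surviving candidates are therefore $d$ even together with $c = 1$ or $c = \pm 2^k$ for some $k \ge 1$.

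For these survivors I would pass to the index divisibility graph. By Theorem \ref{eventhm}, $P_{d,c} = \{2\}$ in every remaining case, so from the root $1$ the only vertex reachable (by a type $1$ or a type $2$ edge) is $2$, and the behaviour of $D_{d,c}$ is governed entirely by the edges leaving $2$, that is, by $W_2 = c(c^{d-1}+1)$. The dichotomy is whether $|W_2| = 2$. A direct computation gives $W_2 = 2$ when $c = 1$ (for any even $d$) and when $(d,c) = (2,-2)$, while $v_2(W_2) = v_2(c) \ge 2$ when $4 \mid c$, one has $W_2 = 2(2^{d-1}+1)$ when $c = 2$, and $W_2 = 2(2^{d-1}-1)$ with $2^{d-1}-1 \ge 7$ when $c = -2$ and $d \ge 4$. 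In the first situation vertex $2$ has no outgoing edge (for $p = 2$ one has $v_2(W_2) = 1 = v_2(2)$, and $v_p(W_2) = 0 = v_p(2)$ for every odd $p$), so $G_V = \{1,2\}$ and, by Theorem \ref{maintheorem}, $D_{d,c} = \{1,2\}$ is finite. In every other surviving case either $4 \mid W_2$ or $W_2$ has an odd prime factor $p$, producing a type $1$ edge from $2$ to $4$ or to $2p$; thus $D_{d,c}$ contains an element $\ge 3$ and is infinite.

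The main obstacle is the careful verification at the boundary $|W_2| = 2$: one must check that vertex $2$ truly has no outgoing edge of either type, which is what pins $D_{d,c}$ down to exactly $\{1,2\}$ rather than something larger, and one must handle the sign bookkeeping in $(-2)^{d-1}$ correctly (here $d-1$ is odd, so $(-2)^{d-1} = -2^{d-1}$), since this is precisely what separates the finite pair $(2,-2)$ from the infinite pairs $(d,-2)$ with $d \ge 4$. As a sanity check on $(2,-2)$, note that $2$ is a fixed point of $x^2 - 2$, so the orbit of $0$ is $0,-2,2,2,\dots$ and $W_n = 2$ for all $n \ge 2$, whence $n \mid W_n$ forces $n \le 2$ directly, confirming $D_{2,-2} = \{1,2\}$.
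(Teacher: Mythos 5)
Your proof is correct and follows essentially the same route as the paper's: reduce everything to Lemma \ref{lem:infinite}, prune most pairs using Theorem \ref{propertiesofP} and Lemma \ref{lem:c=0,-1}, and settle the remaining small cases by inspecting $W_2$ and the outgoing edges of the index divisibility graph. The only substantive difference is that your pruning step (odd prime divisors of $c$, rather than the paper's $|c|\ge 3$) leaves the extra survivors $c=\pm 2^k$, which you dispatch correctly via $v_2(W_2)=v_2(c)$; in doing so you also treat $c=2$ explicitly, a case the paper's written proof passes over in silence.
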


\begin{proof}
By Theorem \ref{propertiesofP}, $c \in D_{d,c}$, hence it follows from Lemma \ref{lem:infinite} that $D_{d,c}$ is infinite whenever $|c| \ge 3$. Similarly, if $d$ is odd, then $3 \in P_{d,c}$ by Theorem \ref{propertiesofP}, and again $D_{d,c}$ is infinite. 

For the remainder of the proof, assume that $d$ is even. The cases $c = 0$ and $c = -1$ are handled by Lemma \ref{lem:c=0,-1}, leaving only the cases $c = 1$ and $c = -2$ to consider.

Suppose $c = 1$. In this case $W_1 = 1$ and $W_2 = 2$, and by Theorem \ref{eventhm}, we have $P_{d,1} = \{2\}$. Following the construction of the index divisibility graph, we have a single edge of type 2 emanating from the vertex 1 (the edge $(1,2)$), and there are no edges emanating from the vertex 2. Thus $D_{d,1} = \{1,2\}$.

Suppose $c = -2$. If $d = 2$, then $W_1 = -2$ and $W_2 = 2$. Similar to the previous case, the index divisibility graph only contains a single edge---the edge $(1,2)$---and hence $D_{2,-2} = \{1,2\}$.

Otherwise suppose $d \ge 4$. Then $W_2 = (-2)^d - 2 = -2((-2)^{d-1}+1)$, where $|(-2)^{d-1}+1| > 1$ and is odd. Hence $W_2$ has an odd prime divisor $p$, and therefore $(2,2p)$ is an edge of type 1 in the index divisibility graph. Since $2p \in D_{d,-2}$, it follows that $D_{d,-2}$ is infinite.
\end{proof}

\section{$S_{d,c}$ and $p$-cycles modulo $p$}

In Theorem \ref{propertiesofP}, we give a description of the set $P_{d,c}$. In the case that $d$ is even, Theorem \ref{eventhm} concludes that Theorem \ref{propertiesofP} completely determines $P_{d,c}$. However, when $d$ is odd, the conditions in Theorem \ref{propertiesofP} are insufficient to completely describe the set. This insufficiency is illustrated in Example \ref{ex:x^3+4} where we see that $11 \in P_{3,4}$, yet $11$ does not satisfy any of the conditions in Theorem \ref{propertiesofP}. 

Suppose then that $p \in P_{d,c}$ where both $p$ and $d$ are odd. As we have previously noted, if $p \in P_{d,c}$, then the period of $0$ in $\mathbb{Z}/p \mathbb{Z}$ is a divisor of $p$. If that period is $1$, then $p \mid c$, which Theorem \ref{propertiesofP} already accounts for. Therefore, the primes that are the exception are the odd primes for which $0$ has period $p$ modulo $p$. In other words, the primes of interest are the odd primes $p$ for which $x^d+c$ induces a single cycle of size $p$ in $\bbz/p\bbz$.

It is well known that $\pi(x) = x^d$ is a permutation of $\mathbb{Z}/p\mathbb{Z}$ if and only if $\gcd(d,p-1) = 1$. Hence under the same conditions, it follows that $x^d+c$ is a permutation of $\mathbb{Z}/p\mathbb{Z}$. In particular, we have $\phi = \tau^c \circ \pi$ (over $\mathbb{Z}/p\mathbb{Z}$), where $\tau(x) = x+1$. Since every $p$-cycle is an even permutation, we see that $\phi$ is a $p$-cycle only if $\pi$ is an even permutation. Equivalently, if $\pi$ is an odd permutation of $\mathbb{Z}/p\mathbb{Z}$, then $p \notin P_{d,c}$. 

We now use this observation to prove Theorem \ref{th:exceptional primes}.  For the remainder of this section, let $\ord_n m$ denote the order of $m$ in $(\mathbb{Z}/n\mathbb{Z})^*$. 

In order to understand the sign of $\pi$ as a permutation, we consider its cycle structure, which is given thusly.

\begin{lem}
Suppose $\pi(x) = x^d$ is a permutation of $\mathbb{Z}/p\mathbb{Z}$. Then $\pi$ has a cycle of length $m$ if and only if there exists a divisor $k$ of $p-1$ such that $\ord_k d = m$. Moreover, the number of cycles $N_m$ of length $m$ satisfies
\begin{align*}
mN_m = \sum_{i \mid m, i < m} iN_i.
\end{align*}
\end{lem}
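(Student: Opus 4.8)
The plan is to transport the dynamics of $\pi$ on $(\bbz/p\bbz)^*$ to additive dynamics on $\bbz/(p-1)\bbz$, where the cycle structure of multiplication by $d$ is transparent. Fix a primitive root $g$ modulo $p$ and write each nonzero residue as $g^a$ with $a \in \bbz/(p-1)\bbz$; then $\pi(g^a) = g^{da}$, so under the correspondence $g^a \leftrightarrow a$ the map $\pi$ becomes the map $\mu_d \colon a \mapsto da$ on $\bbz/(p-1)\bbz$. Since $\gcd(d,p-1)=1$ by the permutation hypothesis, $\mu_d$ is a bijection and $\gcd(d,k)=1$ for every $k \mid p-1$, so each order $\ord_k d$ appearing below is well defined. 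The fixed point $0 \in \bbz/p\bbz$ sits outside this picture and must be tracked separately; it always contributes one extra cycle of length $1$.

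First I would prove the characterization of cycle lengths. For $a \in \bbz/(p-1)\bbz$, the length of the $\mu_d$-orbit of $a$ is the least $m$ with $d^m a \equiv a \pmod{p-1}$, i.e. with $(d^m-1)a \equiv 0 \pmod{p-1}$. Writing $k = (p-1)/\gcd(a,p-1)$ for the additive order of $a$, this condition is exactly $k \mid d^m - 1$, i.e. $d^m \equiv 1 \pmod k$, so the orbit length is $\ord_k d$. As $a$ ranges over $\bbz/(p-1)\bbz$, the quantity $k$ ranges over all divisors of $p-1$, and conversely every divisor $k$ is realized, by any $a$ of additive order $k$. Hence $\pi$ has a cycle of length $m$ if and only if $\ord_k d = m$ for some $k \mid p-1$, which is the first assertion; the fixed point $0$ is absorbed into the case $k=1$, where $\ord_1 d = 1$.

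For the counting statement I would fix a length $m$ and organize elements by their additive order. There are exactly $\varphi(k)$ residues of additive order $k$, and by the previous paragraph all of them lie in cycles of length $\ord_k d$; thus $m N_m = \sum_{k \mid p-1,\ \ord_k d = m} \varphi(k)$ once the fixed point $0$ is incorporated. Summing the contributions of all cycles whose length divides $m$ counts precisely the elements fixed by $\pi^m$: on the nonzero part this is the number of solutions of $(d^m-1)a \equiv 0 \pmod{p-1}$, namely $\gcd(d^m-1,\,p-1)$, together with the point $0$. Using $\sum_{k \mid n}\varphi(k) = n$, this yields the identity $\sum_{i \mid m} i N_i = 1 + \gcd(d^m - 1,\, p-1)$, and the recurrence for $m N_m$ then follows by isolating the top term $i = m$ from the sum over the proper divisors $i \mid m$, $i < m$. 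The one delicate point, and the step I expect to require the most care, is the bookkeeping of the fixed point $0$ and the passage from the additive order of $a$ to the gcd count of $\pi^m$-fixed points, since an off-by-one or a mistaken gcd is exactly what would corrupt the recurrence; I would verify the final form against a small case such as $p=11$, $d=3$ before writing it up.
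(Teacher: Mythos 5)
The paper offers no argument for this lemma at all --- its ``proof'' is a one-line citation to Lidl--Mullen and Ahmad --- so any honest derivation is already a departure. Your route (fix a primitive root, transport $\pi$ on $(\mathbb{Z}/p\mathbb{Z})^*$ to $a \mapsto da$ on $\mathbb{Z}/(p-1)\mathbb{Z}$, and read off the orbit length of $a$ as $\ord_k d$ where $k$ is the additive order of $a$) is exactly the standard proof of the cited results, and everything you write up through the identity $\sum_{i \mid m} i N_i = 1 + \gcd(d^m-1,\,p-1)$ is correct, including the bookkeeping of the extra fixed point $0$ and the count $\gcd(d^m-1,p-1)$ of solutions of $(d^m-1)a \equiv 0 \pmod{p-1}$.

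The gap is in your last sentence. Isolating the term $i=m$ from your identity gives
\[
m N_m \;=\; 1 + \gcd(d^m-1,\,p-1) \;-\; \sum_{i \mid m,\ i<m} i N_i,
\]
which is not the displayed recurrence $m N_m = \sum_{i\mid m,\ i<m} i N_i$, so it does not ``then follow.'' The sanity check you proposed but did not carry out settles which formula is right: for $p=11$, $d=3$ the orders $\ord_k 3$ for $k=1,2,5,10$ are $1,1,4,4$, so $N_1=3$ (the cycles $\{0\}$, $\{1\}$, $\{10\}$), $N_2=0$, $N_4=2$; then $4N_4=8$ while $\sum_{i\mid 4,\ i<4} iN_i = 3$, whereas $1+\gcd(3^4-1,10)-3 = 1+10-3 = 8$. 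So the recurrence as printed in the lemma is false, your derived version is the correct one, and it agrees with what Ahmad and Lidl--Mullen actually prove (their recurrences carry the $\gcd(d^m-1,p-1)$ term, which has evidently been dropped from the paper's display). Nothing downstream is harmed, since the proof of Theorem \ref{th:exceptional primes} uses only Lemma \ref{lem:cycle}; but your write-up should state the corrected recurrence and flag the misprint rather than assert that your identity yields the statement as printed.
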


\begin{proof}
See Lidl and Mullen \cite[Theorem 1]{l91}, as well as Ahmad \cite[Theorem 1]{a69} for a more general statement.
\end{proof}

In particular, letting $\varphi$ denote the Euler totient function, the theory of cyclic groups gives the following cycle structure.

\begin{lem} \label{lem:cycle}
Let $p$ be prime, and suppose $\gcd(d,p-1) \neq 1$. Then $x \mapsto x^d$ is a permutation of $\bbz/p\bbz$ with the following cycle structure:
\begin{enumerate}
\item $0$ is fixed, and
\item for each divisor $k$ of $p-1$, there are $\varphi(k)$ elements of $(\bbz/p\bbz)^*$ of order $\ord_k d$, i.e. the permutation contains $\varphi(k)/(\ord_k d)$ cycles of length $\ord_k d$ for each divisor $k$ of $p-1$.
\end{enumerate}
\end{lem}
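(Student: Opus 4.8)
The plan is to reduce the cycle structure of $\pi$ on the multiplicative group to the familiar action of multiplication by $d$ on a cyclic additive group. First, I note that the stated hypothesis should read $\gcd(d,p-1) = 1$, since this is precisely the condition under which $\pi(x) = x^d$ is a permutation of $\bbz/p\bbz$, as recalled above. Under this hypothesis, $\pi(0) = 0$, which gives the fixed point in (1), so it remains to analyze the action on $(\bbz/p\bbz)^*$. I would fix a generator $g$ of the cyclic group $(\bbz/p\bbz)^*$ and use the isomorphism $\bbz/(p-1)\bbz \isom (\bbz/p\bbz)^*$ given by $a \mapsto g^a$. Under this identification $\pi(g^a) = g^{da}$, so $\pi$ corresponds exactly to the multiplication-by-$d$ map $\mu_d \colon a \mapsto da$ on $\bbz/(p-1)\bbz$. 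Thus the cycle structure of $\pi$ on $(\bbz/p\bbz)^*$ equals that of $\mu_d$ on $\bbz/(p-1)\bbz$.

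Next I would compute the orbit lengths of $\mu_d$. For $a \in \bbz/(p-1)\bbz$, let $k = (p-1)/\gcd(a,p-1)$ be the additive order of $a$; equivalently, $g^a$ has multiplicative order $k$, and $k \mid p-1$. The orbit of $a$ under $\mu_d$ closes up at the least $m \ge 1$ with $d^m a \equiv a \pmod{p-1}$, i.e. $(p-1) \mid (d^m-1)a$, which by the definition of $k$ is equivalent to $k \mid d^m - 1$, that is $d^m \equiv 1 \pmod k$. Since $\gcd(d,k) = 1$ (as $k \mid p-1$ and $\gcd(d,p-1)=1$), the least such $m$ is exactly $\ord_k d$. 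Hence every element of additive order $k$ lies in a $\mu_d$-cycle of length $\ord_k d$, and translating back, every element of $(\bbz/p\bbz)^*$ of multiplicative order $k$ lies in a $\pi$-cycle of length $\ord_k d$.

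Finally, for the count, the number of elements of $(\bbz/p\bbz)^*$ of exact order $k$ is $\varphi(k)$ for each divisor $k$ of $p-1$, which establishes the first clause of (2). Because $\mu_d$ preserves additive order (multiplication by the unit $d$ modulo $p-1$ fixes $\gcd(a,p-1)$), the $\varphi(k)$ elements of order $k$ are permuted among themselves and, by the previous paragraph, are partitioned into disjoint cycles all of length $\ord_k d$; this yields exactly $\varphi(k)/\ord_k d$ cycles of length $\ord_k d$, as claimed. I expect the only delicate point to be the orbit-length computation, specifically verifying that the closure condition $d^m a \equiv a$ depends on $a$ only through its order $k$ and reduces cleanly to $\ord_k d$, together with the observation that $\mu_d$ preserves the order $k$, which is what guarantees the quotient $\varphi(k)/\ord_k d$ is a genuine integer rather than merely a formal ratio.
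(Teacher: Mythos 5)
Your proof is correct, including your observation that the hypothesis should read $\gcd(d,p-1)=1$: the ``$\neq$'' in the statement is a typo, since that coprimality is exactly what makes $x\mapsto x^d$ a permutation of $\bbz/p\bbz$. The paper supplies no proof of this lemma---it defers to ``the theory of cyclic groups'' and to the cited results of Lidl--Mullen and Ahmad---and your discrete-logarithm reduction to multiplication by $d$ on $\bbz/(p-1)\bbz$ is precisely the standard argument underlying those citations, with the orbit-length computation (reducing $d^m a\equiv a \pmod{p-1}$ to $d^m\equiv 1\pmod k$) and the integrality of $\varphi(k)/\ord_k d$ both handled correctly.
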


The following Lemma will also prove useful.

\begin{lem} \label{lem:2 group}
Let $d$ be an odd integer, let $\mu = v_2(d-1)$, and let $\nu = v_2(d^2-1)-1$ (i.e. $\nu = \max\{v_2(d-1),v_2(d+1)\}$). Then
\begin{align*}
\ord_{2^k} d = \begin{cases}
1 & 0 \le k \le \mu\\
2 & \mu < k \le \nu\\
2^{k-\nu} & \nu < k.
\end{cases}
\end{align*}
\end{lem}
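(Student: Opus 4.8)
The plan is to reduce the computation of $\ord_{2^k} d$ to the $2$-adic valuation of $d^m - 1$ and then invoke the lifting-the-exponent identity at the prime $2$. By definition $\ord_{2^k} d$ is the least positive integer $m$ with $d^m \equiv 1 \pmod{2^k}$, equivalently the least $m \ge 1$ such that $v_2(d^m - 1) \ge k$. So the entire lemma follows once I know $v_2(d^m - 1)$ as a function of $m$, and the three branches of the formula will correspond to the three ways this valuation can first reach the threshold $k$.

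The key input is the standard lifting-the-exponent formula at $2$ for odd $d$:
\begin{align*}
v_2(d^m - 1) = \begin{cases} v_2(d - 1) & m \text{ odd},\\ v_2(d-1) + v_2(d+1) + v_2(m) - 1 & m \text{ even}. \end{cases}
\end{align*}
I would prove this by induction on $v_2(m)$. The odd case comes from the factorization $d^m - 1 = (d-1)(d^{m-1} + \cdots + d + 1)$, whose second factor is a sum of $m$ odd terms and hence odd. The even case reduces to $m = 2$ by repeatedly applying $d^{2t} - 1 = (d^t - 1)(d^t + 1)$ and observing that when $t$ is even the factor $d^t + 1$ contributes exactly one factor of $2$, so each doubling of $m$ raises the valuation by exactly $1$.

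Next I record the arithmetic that matches this to the stated hypotheses. First, $v_2(d^2 - 1) = v_2(d-1) + v_2(d+1)$. Since $d$ is odd, $d - 1$ and $d + 1$ are consecutive even integers, so exactly one of them is divisible by $4$; hence $\min\{v_2(d-1), v_2(d+1)\} = 1$, and $\nu = \max\{v_2(d-1), v_2(d+1)\} = v_2(d^2-1) - 1$, in agreement with the definition of $\nu$. In particular $v_2(d-1) + v_2(d+1) = \nu + 1$, so the even-$m$ formula simplifies to $v_2(d^m - 1) = \nu + v_2(m)$, while the odd-$m$ formula reads $v_2(d^m - 1) = \mu$.

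With these simplifications the three cases are immediate. If $0 \le k \le \mu$, then $m = 1$ already gives $v_2(d-1) = \mu \ge k$, so $\ord_{2^k} d = 1$ (the boundary $k = 0$ is covered, as $(\mathbb{Z}/2^0\mathbb{Z})^*$ is trivial). If $\mu < k \le \nu$, then no odd $m$ works, since odd $m$ yields valuation $\mu < k$, whereas $m = 2$ yields valuation $\nu + 1 \ge k$; hence $\ord_{2^k} d = 2$. If $\nu < k$, then odd $m$ again fail, and for even $m$ we need $\nu + v_2(m) \ge k$, i.e. $v_2(m) \ge k - \nu \ge 1$, whose least solution is $m = 2^{k-\nu}$, giving $\ord_{2^k} d = 2^{k-\nu}$. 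The main technical obstacle is establishing the lifting-the-exponent formula cleanly at $p = 2$, specifically the even case with its correction term $-1$ and the dependence on $v_2(m)$; once that is in hand, the remaining case analysis is routine bookkeeping.
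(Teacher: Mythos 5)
Your proof is correct and follows essentially the same route as the paper's: both arguments come down to tracking $v_2(d^m-1)$, with the first two cases amounting to $d \equiv \pm 1 \pmod{2^k}$ and the last resting on the fact that $v_2(d^{2^j}-1) = \nu + j$. You establish the full lifting-the-exponent identity at $2$ for all $m$, which is slightly more than the paper uses (it only needs $m = 2^j$, together with the implicit fact that the order divides $2^{k-1}$), but the underlying mechanism is identical.
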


\begin{proof}
If $v_2(d-1) \ge k$, then $d \equiv 1 \pmod{2^k}$, hence $\ord_{2^k} d = 1$. If $v_2(d+1) \ge k > 1$, then $d \equiv -1 \pmod{2^k}$, hence $\ord_{2^k} = 2$. Otherwise $v_2(d^{2^j} - 1) = \nu+j$, and it follows that $2^{k-\nu}$ is the order of $d$.
\end{proof}

\begin{proof}[Proof of Theorem \ref{th:exceptional primes}]
Let $p$ be a prime where $p \equiv 1 \pmod 4$. We will show that $\pi(x) = x^d$ is an odd permutation of $\mathbb{Z}/p\mathbb{Z}$ if and only if $d \equiv 3 \pmod 4$, which by the discussion at the start of this section is sufficient to prove the theorem. Moreover, we assume that $\gcd(d,p-1) = 1$, as this is both necessary and sufficient for $\pi$ to be a permutation. 

The cycle type of $\pi$ is given in Lemma \ref{lem:cycle}. Let $N_k = \varphi(k)/(\ord_k d)$ be the number of cycles of length $k$ in $\pi$. Since a $k$-cycle is the product of $k-1$ transpositions, we see that $\pi$ may be written as a product of the following number of transpositions:
\begin{align*}
\sum_{k \mid p-1} N_k((\ord_k d) -1) &= \sum_{k\mid p-1} \varphi(k) - \sum_{k \mid p-1}N_k \\
&= p-1 -\sum_{k \mid p-1} N_k.
\end{align*}
It now suffices to determine when $\sum_{k\mid p-1} N_k$ is odd.

To count the cycles, write $p-1 = 2^\lambda\omega$, where $\omega$ is odd. Then
\begin{align*}
\sum_{k \mid p-1} N_k = \sum_{\delta \mid \omega}\sum_{0 \le i \le \lambda} N_{2^i\delta}.
\end{align*}

Consider first the sum over $\delta > 1$; we will show that this is even. Using the same notation as in Lemma \ref{lem:2 group}, let $\mu = v_2(d-1)$ and $\nu = v_2(d^2-1)-1$. Then for each $\delta$, we have
\begin{align*}
\sum_{0 \le i \le \lambda} N_{2^i\delta} &= N_\delta + N_{2\delta} 
+ \sum_{2 \le i \le \mu} \frac{\varphi(2^i \delta)}{\ord_{2^i\delta} d} 
+ \sum_{\mu < i \le \nu} \frac{\varphi(2^i \delta)}{\ord_{2^i\delta} d} 
+ \sum_{\nu < i \le \lambda} \frac{\varphi(2^i \delta)}{\ord_{2^i\delta} d}.
\end{align*}
Note that $N_\delta + N_{2\delta} =2N_\delta$ since
\begin{align*}
N_{2\delta} = \frac{\varphi(2\delta)}{\lcm(\ord_2 d,\ord_\delta d)} = \frac{\varphi(\delta)}{\ord_\delta d} = N_\delta.
\end{align*}

Next, $\ord_{2^i\delta} d = \lcm(\ord_{2^i} d,\ord_\delta d)$ by the Chinese remainder theorem. Moreover, $\ord_\delta d \mid \varphi(\delta)$ because $\varphi(\delta) = |(\bbz/\delta\bbz)^*|$, and $\ord_\delta d$ is the order of $d$ in $(\bbz/\delta\bbz)^*$. Hence
\begin{align*}
\sum_{2 \le i \le \mu} \frac{\varphi(2^i \delta)}{\ord_{2^i\delta} d} 
= \sum_{2 \le i \le \mu} \frac{2^{i-1}\varphi(\delta)}{\ord_{\delta} d} 
\equiv 0 \pmod 2,
\end{align*}
Now since $i \ge 2$, 
\begin{align*}
\sum_{\mu < i \le \nu} \frac{\varphi(2^i \delta)}{\ord_{2^i\delta} d} 
= \sum_{\mu < i \le \nu} \frac{2^{i-1}\varphi(\delta)}{\lcm(2,\ord_{\delta} d)} \equiv 0 \pmod 2,
\end{align*}
and similarly,
\begin{align*}
\sum_{\nu < i \le \lambda} \frac{\varphi(2^i \delta)}{\ord_{2^i\delta} d}
= \sum_{\nu < i \le \lambda} \frac{2^{i-1}\varphi(\delta)}{\lcm(2^{k-v},\ord_{\delta} d)} \equiv 0 \pmod 2.
\end{align*}
We conclude that the portion of the sum where $\delta > 1$ is even.

We are left to consider the contribution from $\delta = 1$. Here,
\begin{align*}
\sum_{0 \le i \le \lambda} N_{2^i} &= \sum_{0 \le i \le \lambda} \frac{\varphi(2^i)}{\ord_{2^i} d}\\
&= 2+\sum_{2\le i \le \lambda} \frac{2^{i-1}}{\ord_{2^i} d}\\
&\equiv \begin{cases*}
1 \pmod 2 & if $v_2(d-1)=1$\\
0 \pmod 2 & otherwise.
\end{cases*}
\end{align*}
Therefore, $\pi$ is odd if and only if $d \equiv 3 \pmod 4$, concluding the proof.
\end{proof}

\begin{cor}
\label{cor:11mod12}
If $p \in P_{3^k,c}$ and $k$ is odd, then either $p = 2$, $p \mid c$, or $p \equiv 11 \pmod{12}$.
\end{cor}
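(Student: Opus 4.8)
The plan is to reduce to the genuinely new case and then assemble two congruence constraints, coming respectively from the existence of the permutation $\pi(x)=x^{3^k}$ and from Theorem \ref{th:exceptional primes}. First I would dispose of the two listed alternatives: if $p=2$ or $p\mid c$ there is nothing to prove, so I assume throughout that $p$ is an odd prime with $p\nmid c$. Given $p\in P_{3^k,c}$, we have $p\mid W_p$, so $0$ is periodic modulo $p$ with period dividing the prime $p$; since $p\nmid c$ rules out period $1$, the period is exactly $p$. As explained at the start of this section, a period-$p$ orbit of $0$ must exhaust $\mathbb{Z}/p\mathbb{Z}$, so $\phi$ acts as a single $p$-cycle (that is, $p\in S_{3^k,c}$) and in particular $\phi$, hence $\pi(x)=x^{3^k}$, is a permutation of $\mathbb{Z}/p\mathbb{Z}$.

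Next I would extract the two residue conditions. From the permutation criterion $\gcd(3^k,p-1)=1$ we get $3\nmid p-1$, which for $p\neq 3$ forces $p\equiv 2\pmod 3$. For the modulus $4$ condition, I would note that since $k$ is odd, $3^k\equiv(-1)^k\equiv 3\pmod 4$, so the hypotheses of Theorem \ref{th:exceptional primes} are met with $d=3^k$; because $p\in S_{3^k,c}$, that theorem rules out $p\equiv 1\pmod 4$, and as $p$ is odd this gives $p\equiv 3\pmod 4$. Combining $p\equiv 2\pmod 3$ with $p\equiv 3\pmod 4$ via the Chinese remainder theorem yields $p\equiv 11\pmod{12}$, as desired.

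Each individual step is short, so I do not anticipate a deep obstacle: the corollary is essentially a repackaging of Theorem \ref{th:exceptional primes} together with the permutation criterion, and the only routine points requiring care are the elementary congruence $3^k\equiv 3\pmod 4$ and the deduction that a period-$p$ orbit forces $\phi$ to be a bijection. The one genuine subtlety, and the step I would treat most carefully, is the boundary prime $p=3$: here $p\mid d$, so $\pi$ is the identity (an even permutation) and $3\nmid p-1$ holds only because $p-1=2$, while $p\equiv 0\pmod 3$ rather than $2\pmod 3$. I would handle this by recording that $3\in P_{3^k,c}$ is already accounted for by Theorem \ref{propertiesofP}(3), since $3^k\equiv 1\pmod 2$; it thus belongs to the family of primes dividing $d$ and lies outside the scope of the mod-$3$ argument, while for every odd prime $p\neq 3$ with $p\nmid c$ the reasoning above delivers $p\equiv 11\pmod{12}$.
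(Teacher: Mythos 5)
Your argument follows the paper's proof essentially step for step: reduce to odd $p$ with $p\nmid c$, observe that $\phi$ must then act as a single $p$-cycle so that $x^{3^k}$ is a permutation of $\mathbb{Z}/p\mathbb{Z}$, extract $p\not\equiv 1\pmod 3$ from $\gcd(3^k,p-1)=1$ and $p\not\equiv 1\pmod 4$ from Theorem \ref{th:exceptional primes} (using $3^k\equiv 3\pmod 4$ for odd $k$), and combine the two congruences. Your extra care at the boundary prime $p=3$ is well placed, and in fact it exposes a small defect in the statement itself that the paper's own proof glosses over: by Theorem \ref{propertiesofP}(3) one has $3\in P_{3^k,c}$ for \emph{every} $c$, yet when $3\nmid c$ the prime $3$ satisfies none of the three listed alternatives ($3\neq 2$, $3\nmid c$, and $3\equiv 3\pmod{12}$); concretely, for $\phi(x)=x^3+1$ one computes $W_3=9$, so $3\in P_{3,1}$. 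The paper's final step, which passes from $p\not\equiv 1\pmod 3$ and $p\not\equiv 1\pmod 4$ to $p\equiv 11\pmod{12}$, tacitly assumes $p$ is coprime to $12$, so the corollary should really list $p=3$ as a fourth alternative; aside from making that exception explicit rather than declaring it ``outside the scope,'' your proof is complete and matches the paper's.
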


\begin{proof}
The cases $p=2$ and $p\mid c$ are due to Theorem \ref{propertiesofP}. Otherwise, if $p \in P_{3^k,c}$, $k$ is odd, and $p \nmid c$, then $x^{3^k} + c$ is a cyclic permutation of $\mathbb{Z}/p\mathbb{Z}$, and hence $p \not\equiv 1 \pmod 3$. Finally, $p \not\equiv 5 \pmod{12}$ by Theorem \ref{th:exceptional primes}.
\end{proof}

As evidenced in Example \ref{ex:x^3+4}, primes falling into this third category do exist.

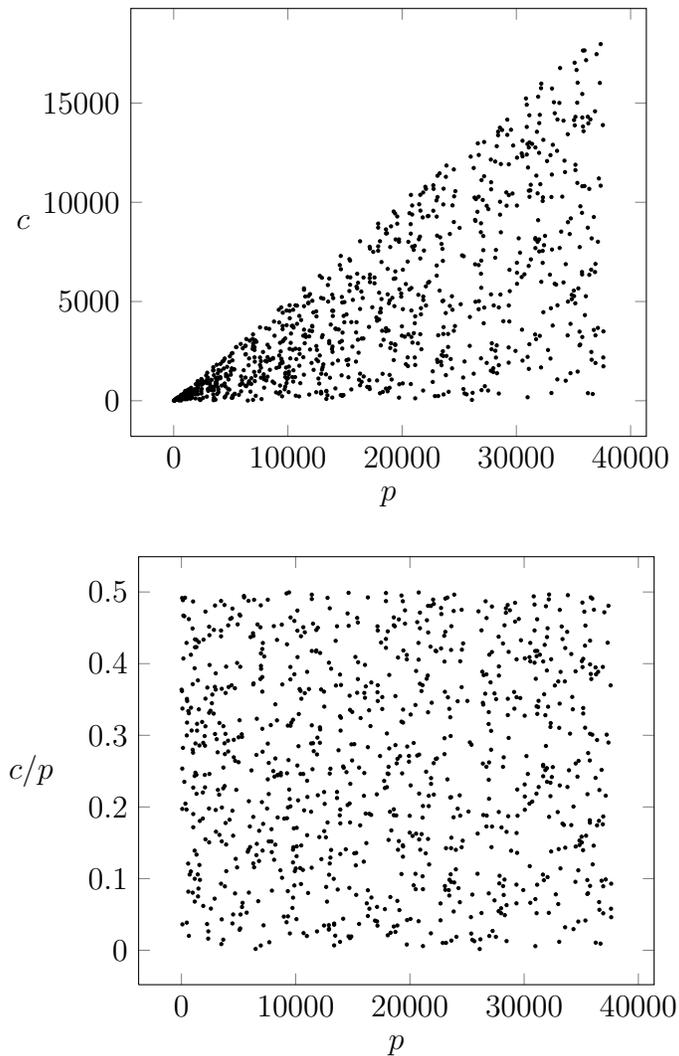
\begin{figure}[ht]
\begin{tikzpicture}
\begin{axis}[
scaled ticks=false,
title={\;},
xlabel={$p$},
tick label style={/pgf/number format/1000 sep={}},
ylabel={$c$}, 
ylabel style={rotate=-90},
ylabel style={xshift=-0.2cm}
]
\addplot+[scatter, only marks,
scatter/use mapped color={
    draw=black,
    fill=black,
},
mark size=0.6pt, mark=*, x=p,y=c]table{pvsc.dat};
\end{axis}
\end{tikzpicture}

\begin{tikzpicture}
\begin{axis}[
scaled ticks=false,
title={\;},
xlabel={$p$},
tick label style={/pgf/number format/1000 sep={}},
ylabel={$c/p$}, 
ylabel style={rotate=-90},
ylabel style={xshift=-0.2cm}
]
\addplot+[scatter, only marks,
scatter/use mapped color={
    draw=black,
    fill=black,
},
mark size=0.6pt, mark=*, x=p,y=cp]table{pvsc-scaled.dat};
\end{axis}
\end{tikzpicture}
\caption{The graph on the top shows a scatterplot of pairs $(p,c)$ such that $p \le 37619$ is prime and $p \in S_{3,c}$.  Below, the same scatterplot is scaled so that the pairs are of the form $(p,c/p)$.}
\label{fig:pvsc}
\end{figure}

\section{Heuristics and Experiment for $S_{d,c}$}

In this section, we provide some data to support Conjecture \ref{conj:s} in the case that $d=3$ and $d=9$, and consider some heuristic arguments.

In Figure \ref{fig:pvsc}, we plot all pairs $(p,c) \in [3,37619] \times [1,p/2]$ for which $p \in S_{3,c}$.  When $d$ is odd, if $p \in S_{d,c}$, then $p \in S_{d,c'}$ for any $c' \equiv \pm c \pmod p$ (Proposition \ref{prop:modsym}); hence the restriction to the interval $[1,p/2]$.  The data indicates that these pairs occur somewhat frequently and that the pairs $(p,c/p)$ seem to be distributed uniformly randomly in the rectangle $[1,37619]\times[0,0.5]$.

Based on this observation, as $c$ is restricted to integral values in $[1,(p-1)/2]$, let us adopt the following heuristic assumption:

\begin{hypothesis}
\label{hypo}
For any fixed $c$ the probability that a prime $p \ge 2|c|$ satisfies $p \in S_{3,c}$ is $2/(p-1)$.
\end{hypothesis}

Under this hypothesis, we may compute the expected number of pairs $(p,c)$ in the data set for any given $c$. Namely, the expectation for the number of data points for any fixed $c$ is
\begin{align*}
E_X(c) = \sum_{\substack{p \in [2|c|,X] \\ p \equiv 11 \bmod{12}}} \frac{2}{p-1}.
\end{align*}
In particular, $E_X(c) \rightarrow \infty$ as $X \rightarrow \infty$.
This quantity $E_X(c)$ is compared to the actual count for our data ($X = 37619$) in Figure \ref{fig:expectation}. 

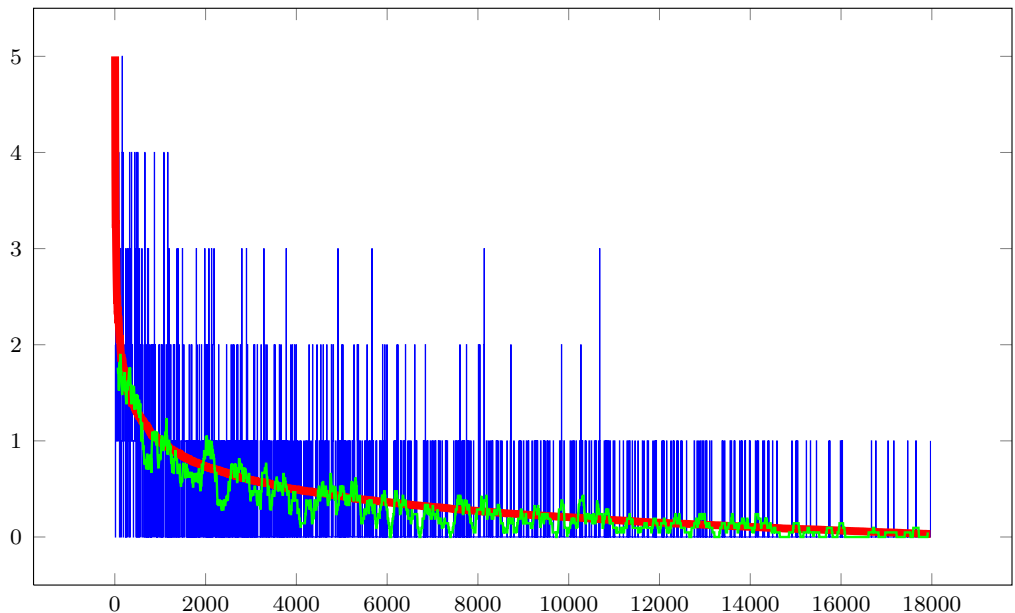
\begin{figure}[ht]
\begin{tikzpicture}[scale=1.05]
\begin{axis}[
width = \textwidth,
height=3.5in,
scaled ticks=false,
title={\;},
tick label style={font = \tiny, /pgf/number format/1000 sep={}},
]
\addplot[const plot,blue] table {rawc.dat};
\addplot[line width = 1mm,red] table {exp.dat};
\addplot[line width = .3mm,green] table {21avg.dat};
\end{axis}
\end{tikzpicture}
\caption{The data in Figure \ref{fig:pvsc} is collected in this figure by $c$ value in bins of size $6$. For each $k \in \bbn$, the value of the blue graph on the interval $[6(k-1),6k)$ is the number of pairs $(p,c)$ in the data for which $6(k-1) \le c < 6k$. At each point $x$, the green line is the average of the blue function over the interval $(x-60,x+60)$. The red line is the theoretical expectation under the assumption that the data is random, i.e. it is the graph of $E_{37619}(x)$.}
\label{fig:expectation}
\end{figure}

In Figure \ref{fig:3-exceptional}, we see that for approximately 60\% of $11 \pmod{12}$ primes, there exists a $c$ for which $p \in P_{3,c}$. Similarly in Figure \ref{fig:9-exceptional}, for just under 50\% of primes that are $5 \pmod 6$, there exists a $c$ such that $p \in P_{9,c}$.  (Corollary \ref{cor:11mod12} does not apply here; however we may restrict to $p \equiv 5 \pmod 6$ since if $p \equiv 1 \pmod 6$, then $\gcd(p-1,9) > 1$, so $x^9+c$ is not a permutation of $\mathbb{Z}/p\mathbb{Z}$.)  Of these primes, approximately two-thirds of them are $11 \pmod{12}$.

\begin{figure}[ht]
\begin{tikzpicture}
\begin{axis}[
scaled ticks=false,
title={\;},
xlabel={$X$},
tick label style={/pgf/number format/1000 sep={}},
ylabel={$\displaystyle\frac{\#U(X)}{\#P(X)}$}, 
ylabel style={xshift=-3mm},
ylabel style={rotate=-90}
]
\addplot+[black,only marks,mark size=0.6pt] table {ratio.dat};
\end{axis}
\end{tikzpicture}
\caption{Let $T(X) = \{p \le X : p \text{ prime}, p \equiv 11 \pmod{12}\}$ and $U(X) = \{p \in T(X) : p \in P_{3,c} \text{ for some $c$}\}$. The plot shows the ratio $\#U/\#T$ for $X \le 37619$.}
\label{fig:3-exceptional}
\end{figure}
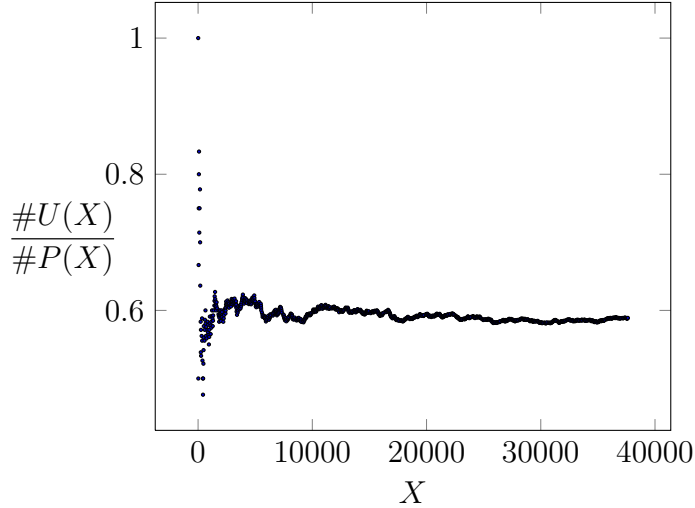

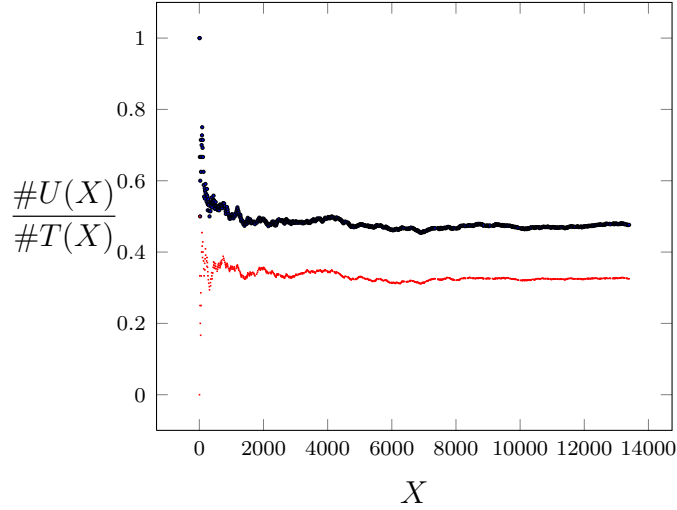
\begin{figure}[ht]
\begin{tikzpicture}
\begin{axis}[
scaled ticks=false,
title={\;},
xlabel={$X$},
tick label style={font=\tiny,/pgf/number format/1000 sep={}},
ylabel={$\displaystyle\frac{\#U(X)}{\#T(X)}$}, 
ylabel style={rotate=-90}
]
\addplot+[black,only marks,mark size=0.6pt] table {nine.dat};
\addplot+[only marks,mark size=0.1pt] table {nine2.dat};
\end{axis}
\end{tikzpicture}
\caption{Let $T(X) = \{p \le X : p \text{ prime}, p \equiv 5 \pmod 6\}$ and $U(X) = \{p \in T(X) : p \in P_{9,c} \text{ for some $c$}\}$. The plot shows the ratio $\#U/\#T$ for $X \le 13397$ [upper set]. Of the primes in $U(X)$, roughly two-thirds of them are $11 \pmod{12}$ [lower set].}
\label{fig:9-exceptional}
\end{figure}

We finish this section with a brief discussion of the relationship to dynatomic polynomials.  That is, fixing $p$, the number of $1 \le c \le p-1$ for which $p \in S_{3,c}$ is the number of non-zero roots of a dynatomic polynomial, as follows.  Given a family of maps $\phi_c$ (for us, $\phi_c(x) = x^3 + c$), write $\Psi_{n,0}(c) \in \mathbb{Z}[c]$ for the polynomial whose roots are those $c$ for which $0$ has period $n$, i.e.
\[
\Psi_{n,0}(c) = \phi_c^n(0).
\]
Then the $n$-th dynatomic polynomial $\Phi_{n,0}(c)$ is defined so that
\[
\Psi_{n,0} = \prod_{d \mid n} \Phi_{n,0}.
\]
In particular, $\Phi_{n,0}$ has as roots those $c$ such that $0$ has minimal period $n$ under the map $x^3+c$.  For more on these standard definitions, see \cite{SilvermanBook}.

With this setup, for a fixed $p$, the number of $1 \le c \le p-1$ for which $p \in S_{3,c}$ is equal to the number of non-zero roots of $\Phi_{p,0}(c)$ modulo $p$. 
One should note that the cyclotomic polynomials, which are considered analogous to the dynatomic polynomials, have very non-random behaviour in the corresponding situation, i.e. $\Phi_p(x)$ never has a root modulo $p$.

However, this raises an interesting general question.

\begin{question}
As the integer $n$ and prime $p$ vary, what is the splitting behaviour of $\Phi_{n,0}(c)$ modulo $p$?
\end{question}

For a polynomial $f \in \mathbb{Z}[x]$, the splitting behaviour modulo primes is understood via the Tchebotarev Density Theorem.  However, cyclotomic polynomials have different statistics than generic polynomials, since their Galois groups are necessarily abelian.  To what extent are dynatomic polynomials generic?

\section{Fixed $n$ and variable $c,d$}

In this section, we investigate $D_{d,c}$ from a different perspective:  for a fixed $n \in \mathbb{Z}$, in which $D_{d,c}$ does it appear? Let $H_n = \{(d,c) : n \in D_{d,c}\}$.

\begin{prop} \label{prop:modsym} 
For any integers $d,c \in \mathbb{Z}$, where $d \ge 2$, we have the following.
\begin{enumerate}
\item If $n \mid c$, then $(d,c) \in H_n$.
\item If $d \equiv 1 \pmod{n-1}$ and $n$ is prime, then $(d,c) \in H_n$.
\item If $(d,c_0) \in H_n$, then $(d,c) \in H_n$ whenever $c \equiv c_0 \pmod n$. Additionally, if $d$ is odd, then $(d,-c) \in H_n$ whenever $(d,c) \in H_n$.
\end{enumerate}
\end{prop}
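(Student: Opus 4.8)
The plan is to prove the three parts of Proposition~\ref{prop:modsym} by analyzing how the sequence $(W_n)$, viewed as a function of the parameters $(d,c)$, behaves modulo $n$. The key observation throughout is that $W_n = \phi^n(0)$ is a polynomial in $c$ (for each fixed $d$) with integer coefficients, and that $n \mid W_n$ is a congruence condition on that polynomial evaluated at $c$.

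First I would dispatch parts (1) and (2), as these are essentially restatements of results already in hand. For part (1), if $n \mid c$ then $n \mid W_m$ for all $m$ by the induction argument in the proof of Theorem~\ref{propertiesofP} (the proof that $c \mid W_m$ for all $m$ applies verbatim with $c$ replaced by any divisor of $c$, in particular $n$ itself when $n \mid c$); in particular $n \mid W_n$, so $(d,c) \in H_n$. For part (2), the condition $d \equiv 1 \pmod{n-1}$ with $n$ prime is exactly the hypothesis of Theorem~\ref{propertiesofP}(3), whose proof shows $\phi^n(x) \equiv x \pmod n$, hence $W_n = \phi^n(0) \equiv 0 \pmod n$ and $(d,c) \in H_n$.

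The substance is in part (3). For the congruence statement, I would argue that for fixed $d$, the iterate $\phi^n(0)$ is a polynomial expression in $c$ with integer coefficients, built up by repeated substitution of the form $W_{k+1} = W_k^d + c$; consequently the residue of $W_n$ modulo $n$ depends only on the residue of $c$ modulo $n$. Concretely, if $c \equiv c_0 \pmod n$, then by induction on $k$ one has $W_k(c) \equiv W_k(c_0) \pmod n$ for every $k$, since $W_{k+1}(c) = W_k(c)^d + c \equiv W_k(c_0)^d + c_0 = W_{k+1}(c_0) \pmod n$. Taking $k = n$ and using $(d,c_0) \in H_n$ (i.e.\ $n \mid W_n(c_0)$) gives $n \mid W_n(c)$, so $(d,c) \in H_n$. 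For the sign-symmetry statement when $d$ is odd, I would use the substitution $x \mapsto -x$: writing $\psi(x) = x^d - c$ for the map associated to parameter $-c$, one checks that $\psi(-x) = (-x)^d - c = -(x^d + c) = -\phi(x)$ when $d$ is odd, so $\psi^m(-x) = -\phi^m(x)$ for all $m$ by induction. Evaluating at $x = 0$ yields $\psi^m(0) = -\phi^m(0)$, i.e.\ the orbit of $0$ under parameter $-c$ is the negative of the orbit under parameter $c$. Therefore $W_n^{(-c)} = -W_n^{(c)}$, and $n \mid W_n^{(c)}$ immediately gives $n \mid W_n^{(-c)}$, so $(d,-c) \in H_n$.

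The only genuinely delicate point, and the step I would be most careful with, is the induction in the congruence argument: one must make sure the claim ``$W_k(c) \equiv W_k(c_0) \pmod n$'' is stated as a congruence between the two specializations of the same parameter and not conflated with a congruence in $k$. The verification is routine once set up correctly, but it is worth stating explicitly that the map $t \mapsto t^d + c$ respects congruence modulo $n$, so that the whole orbit computation descends to $\mathbb{Z}/n\mathbb{Z}$; this is what makes the residue of $W_n$ a function of $c \bmod n$ alone. Everything else is bookkeeping.
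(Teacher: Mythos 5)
Your proposal is correct and follows essentially the same route as the paper: parts (1) and (2) are read off from Theorem \ref{propertiesofP}, the congruence claim in (3) comes from the fact that $\phi_c$ and $\phi_{c_0}$ coincide over $\mathbb{Z}/n\mathbb{Z}$, and the sign symmetry comes from the identity $\phi_{-c}(x) = -\phi_c(-x)$ for odd $d$, iterated and evaluated at $0$. The only difference is that you spell out the inductions that the paper leaves implicit.
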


\begin{proof}
The first two properties are immediate from Theorem \ref{propertiesofP}. For the third, set $\phi_c(x) = x^d + c$. If $c \equiv c_0 \pmod n$, then $\phi_c$ and $\phi_{c_0}$ are identical over $\mathbb{Z}/n\mathbb{Z}$. Hence $(d,c) \in H_n$ if and only if $(d,c_0) \in H_n$. Moreover, if $d$ is odd, then $\phi_{-c}(x) = -\phi_c(-x)$. Thus if $\phi^n_c(0) \equiv 0 \pmod n$, then $\phi_{-c}^n(0) \equiv 0 \pmod n$.
\end{proof}

Finally, we have a result regarding the powers of $d$ when $d$ is prime. 

\begin{thm}\label{prop:powersof3} 
If $d$ is prime, there exist $d$-adic integers $a_1, a_2, \ldots, a_{d-1}$, where $a_1 \equiv 1 \pmod d$, $a_2 \equiv 2 \pmod d, \ldots, a_{d-1} \equiv {d-1} \pmod d$, such that if $c \equiv 0, a_1, a_2, \ldots, a_{d-1} \pmod{d^n}$, then $(d,c) \in H_{d^n}$.
\end{thm}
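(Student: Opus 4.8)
The plan is to realize each $a_i$ as an honest $d$-adic root of the single polynomial $G(c) := \phi_c^{d}(0) \in \bbz[c]$, where $\phi_c(x) = x^d + c$ and the exponent denotes $d$-fold composition; that is, $a_i$ will be a value of the parameter for which $0$ is genuinely periodic of period dividing $d$ over $\bbz_d$. The reason for aiming at \emph{exact} periodicity is that it trivializes the passage to the higher powers $d^n$: if $G(a_i) = \phi_{a_i}^{d}(0) = 0$, then iterating the relation $\phi_{a_i}^{\circ d}(0)=0$ gives $\phi_{a_i}^{d^n}(0) = 0$ for every $n \ge 1$ (since $d \mid d^n$), and because $\phi_c^{d^n}(0)$ is an integer polynomial in $c$, the congruence $c \equiv a_i \pmod{d^n}$ forces $\phi_c^{d^n}(0) \equiv \phi_{a_i}^{d^n}(0) = 0 \pmod{d^n}$, i.e. $(d,c) \in H_{d^n}$. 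Thus the whole theorem reduces to producing, for each residue $i \bmod d$, a unique $d$-adic root $a_i \equiv i \pmod d$ of $G$.

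First I would record that every residue is a root of $G$ modulo $d$. Since $d$ is prime and $d \equiv 1 \pmod{d-1}$, Theorem \ref{propertiesofP}(3) applied with $p = d$ gives $d \mid W_d = \phi_c^{d}(0) = G(c)$ for every integer $c$; hence $G(c) \equiv 0 \pmod d$ for all $c$, and in particular each $i \in \{0,1,\dots,d-1\}$ is a root of $G$ mod $d$. (Concretely this reflects $\phi_c(x) \equiv x + c \pmod d$, so $0$ is always periodic of period dividing $d$ modulo $d$, and it is the one place where primality of $d$ is used.)

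Next I would compute the parameter-derivative of $G$ modulo $d$ and show it is a unit. Writing $u_k = \phi_c^{\circ k}(0)$, so that $u_{k+1} = u_k^d + c$, and letting $w_k$ denote $\partial u_k/\partial c$, the chain rule gives the recursion $w_{k+1} = d\,u_k^{d-1} w_k + 1$ with $w_0 = 0$. The leading factor $d$ kills the first term modulo $d$, so $w_k \equiv 1 \pmod d$ for all $k \ge 1$; in particular $G'(c) = w_d \equiv 1 \pmod d$. Therefore each $i$ is a \emph{simple} root of $G$ mod $d$, and Hensel's lemma over $\bbz_d$ lifts it to a unique $a_i \in \bbz_d$ with $a_i \equiv i \pmod d$ and $G(a_i) = 0$. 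For $i=0$ this forces $a_0 = 0$ (indeed $G(0) = \phi_0^{d}(0) = 0$), matching the statement, and combining this with the previous paragraph finishes the argument.

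The main obstacle — really the only place any insight is required — is recognizing that one should target exact $d$-adic periodicity of $0$ rather than attempting a direct congruence-lifting induction on $n$, where the orbit valuations interact awkwardly; once $G(a_i)=0$ is arranged, all the powers $d^n$ come for free. The two supporting computations (the congruence $G \equiv 0 \pmod d$ and the unit derivative $G' \equiv 1 \pmod d$) are short, and the only external input is Hensel's lemma.
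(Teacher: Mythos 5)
Your proof is correct and its core is identical to the paper's: both establish that $W_d(c)\equiv 0 \pmod d$ for every $c$ (via Theorem \ref{propertiesofP} with $p=d$), compute $\frac{d}{dc}W_d(c)\equiv 1 \pmod d$, and apply Hensel's lemma to lift each residue class to a unique $d$-adic root $a_i$ of $W_d$. The only divergence is the closing step: the paper passes from $d^n \mid W_d(c)$ to $d^n \mid W_{d^n}(c)$ by invoking rigid divisibility, whereas you observe that $W_d(a_i)=0$ means $0$ is genuinely periodic over $\mathbb{Z}_d$, so $W_{d^n}(a_i)=0$ exactly, and the congruence $c\equiv a_i\pmod{d^n}$ transfers this to $W_{d^n}(c)\equiv 0\pmod{d^n}$ because $W_{d^n}$ is an integer polynomial in $c$. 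Your version of that step is equally valid and has the minor advantage of not needing Rice's rigid-divisibility result for this particular theorem.
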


\begin{proof}
Let $d$ be prime. From Theorem \ref{propertiesofP}, we have $d \in D_{d,c}$ for all $c \in \bbz$. In particular, $W_d \equiv 0 \pmod d$ for $c \equiv 0, 1, \ldots, d-1 \pmod d$. Considering $W_d$ as a function in $c$ (e.g. $W_d(c) = (\phi^{d-1}(0))^d + c$), we see that $\frac{d}{dc}W_d(c) \equiv 1 \pmod d$. Thus by Hensel's Lemma, each value modulo $d$ lifts to a unique $d$-adic solution. Namely, if $a_0, a_1, a_2, \ldots, a_{d-1} \in \mathbb{Z}_d$ are these lifts (where $a_i \equiv i \pmod d$) and $c \equiv a_i \pmod{d^n}$ for one of these $a_i$, then $W_d(c) \equiv 0 \pmod{d^n}$. It now follows from rigid divisibility that if $d^n \mid W_d$, then $d^n \mid W_{d^n}$. It is straightforward to verify that $a_0 = 0$.
\end{proof}

\bibliographystyle{plain}
\bibliography{IndexDivisibilityinDynamicalSequencesandCyclicOrbitsModp_Chen_Gassert_Stange}

\begin{figure}[ht]
\centering
\begin{tikzpicture}[>=stealth]
\foreach \twopow in {0,1,2} {
	\pgfmathtruncatemacro{\two}{2^\twopow}
	\pgfmathtruncatemacro{\three}{\two * 3}
	\pgfmathtruncatemacro{\eleven}{\two * 11}
	\pgfmathtruncatemacro{\thirtythree}{3 * \eleven}
	
	\draw ++(90:2*\twopow) node (\two) {$\two$}
		+(0:2) node (\eleven) {$\eleven$}
		++(30:1.2) node (\three) {$\three$}
		+(0:2) node (\thirtythree) {$\thirtythree$};
	
	\draw[->] (\two) edge (\three)
		(\two) edge (\eleven)
		(\three) edge (\thirtythree)
		(\eleven) edge (\thirtythree);
	}
	
\foreach \twopow in {0,1} {
	\pgfmathtruncatemacro{\two}{2^\twopow}
	\pgfmathtruncatemacro{\nexttwo}{2 * 2^\twopow}
	\pgfmathtruncatemacro{\three}{\two * 3}
	\pgfmathtruncatemacro{\nextthree}{2 * \two * 3}
	\pgfmathtruncatemacro{\eleven}{\two * 11}
	\pgfmathtruncatemacro{\nexteleven}{2 * \two * 11}
	\pgfmathtruncatemacro{\thirtythree}{3 * \eleven}
	\pgfmathtruncatemacro{\nextthirtythree}{2 * \thirtythree}
	
	\draw[->] (\two) edge (\nexttwo)
		(\three) edge (\nextthree)
		(\eleven) edge (\nexteleven)
		(\thirtythree) edge (\nextthirtythree);
	}
	
\foreach \twopow in {1,2} {
	\pgfmathtruncatemacro{\two}{2^\twopow * 17}
	\pgfmathtruncatemacro{\three}{\two * 3}
	\pgfmathtruncatemacro{\eleven}{\two * 11}
	\pgfmathtruncatemacro{\thirtythree}{3 * \eleven}
	\pgfmathtruncatemacro{\twolabel}{2^\twopow}
	\pgfmathtruncatemacro{\threelabel}{3 * \twolabel}
	\pgfmathtruncatemacro{\elevenlabel}{11 * \twolabel}
	\pgfmathtruncatemacro{\thirtythreelabel}{33 * \twolabel}
	
	\draw (4.6,0)++(90:1.5*\twopow) node (\two) {$\twolabel p_1$}
		+(0:2) node (\eleven) {$\elevenlabel p_1$}
		++(30:1.5) node (\three) {$\threelabel p_1$}
		+(0:2) node (\thirtythree) {$\thirtythreelabel p_1$};
	
	\draw[->] (\two) edge (\three)
		(\two) edge (\eleven)
		(\three) edge (\thirtythree)
		(\eleven) edge (\thirtythree);
	}
	
\foreach \x in {34,102,374,1122} {
	\pgfmathtruncatemacro{\two}{2*\x}
	\draw[->] (\x) edge (\two);
	}	
	
	\draw[->] (2) edge[out=-30,in=180] (34);
	
\foreach \twopow in {1} {
	\pgfmathtruncatemacro{\two}{4 * 5}
	\pgfmathtruncatemacro{\twolabel}{4}
	\pgfmathtruncatemacro{\three}{\two * 3}
	\pgfmathtruncatemacro{\threelabel}{\twolabel * 3}
	\pgfmathtruncatemacro{\eleven}{\two * 11}
	\pgfmathtruncatemacro{\elevenlabel}{11* \twolabel}
	\pgfmathtruncatemacro{\thirtythree}{3 * \eleven}
	\pgfmathtruncatemacro{\thirtythreelabel}{\twolabel * 33}
	
	\draw (4.6,3.5)++(90:1.4*\twopow) node (\two) {$\twolabel p_2$}
		+(0:2) node (\eleven) {$\elevenlabel p_2$}
		++(30:1.5) node (\three) {$\threelabel p_2$}
		+(0:2) node (\thirtythree) {$\thirtythreelabel p_2$};
	
	\draw[->] (\two) edge (\three)
		(\two) edge (\eleven)
		(\three) edge (\thirtythree)
		(\eleven) edge (\thirtythree);
	}	
	
	\draw[->] (4) edge[out=-30,in=-135] (20);

\foreach \twopow in {0,1,2} {
	\pgfmathtruncatemacro{\two}{2^\twopow}
	\pgfmathtruncatemacro{\three}{3*\two}
	\pgfmathtruncatemacro{\threelabel}{3*\two*13}
	\pgfmathtruncatemacro{\eleven}{\three * 11}
	\pgfmathtruncatemacro{\elevenlabel}{\threelabel * 11}
	
	\draw (9.5,1)++(90:2*\twopow) node (\threelabel) {$\three p_3$}
		+(0:2) node (\elevenlabel) {$\eleven p_3$};
	
	\draw[->] (\threelabel) edge (\elevenlabel);
	}
	
\foreach \twopow in {0,1} {
	\pgfmathtruncatemacro{\two}{2^\twopow}
	\pgfmathtruncatemacro{\three}{3*\two}
	\pgfmathtruncatemacro{\threelabel}{3*\two*13}	
	\pgfmathtruncatemacro{\nextthreelabel}{2*\threelabel}
	\pgfmathtruncatemacro{\eleven}{\three * 11}
	\pgfmathtruncatemacro{\elevenlabel}{\threelabel * 11}
	\pgfmathtruncatemacro{\nextelevenlabel}{2*\elevenlabel}
	
	\draw[->] (\threelabel) edge (\nextthreelabel)
		(\elevenlabel) edge (\nextelevenlabel);
	}
	
	\draw[->] (3) edge[out=-15,in=180] (39);
\end{tikzpicture}
\caption{A graphical representation of a portion of $D_{3,4}$. Here $p_1 = 17$, $p_2 = 5$, and $p_3 = 26203$. To avoid clutter, not every edge between the vertices shown here is depicted.}  \label{fig:x^3+4}
\end{figure}
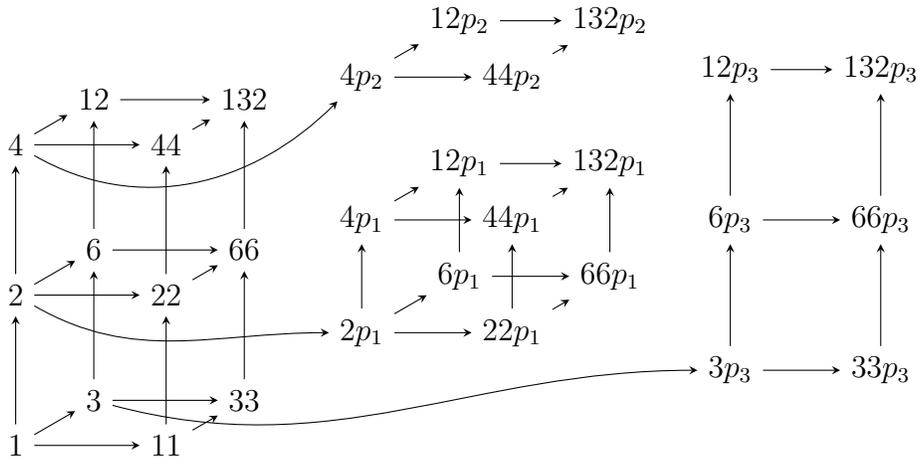

\end{document}